\documentclass[11pt]{amsart}
\usepackage{amsfonts}
\usepackage{amsmath, amsthm}

\usepackage{comment}
\usepackage{tensor}



\hyphenpenalty=1000
\linespread{1.1}
\numberwithin{equation}{section}

\newtheorem*{Theorem}{Theorem}
\newtheorem{Lemma}{Lemma}[section]

\newcommand{\cC}{\mathcal{C}}
\renewcommand{\L}{\mathcal{L}}

\newcommand{\R}{\mathbb{R}}
\newcommand{\cR}{\mathcal{R}}
\renewcommand{\S}{\mathbb{S}}
\newcommand{\cS}{\mathcal{S}}

\newcommand{\del}{\partial}

\renewcommand{\phi}{\varphi}
\newcommand{\grad}{\nabla}
\renewcommand{\epsilon}{\varepsilon}
\newcommand{\II}{\operatorname{II}}

\newcommand{\Lip}{\mathrm{Lip}}

\title[Eigenvalue maximization for surfaces of revolution]{Eigenvalue maximization for surfaces of revolution with prescribed boundary}
\author{Sinan Ariturk}
\date{}

\begin{document}

\begin{abstract}
Fix two parallel circles in $\R^3$ centered about a common axis.
Among surfaces of revolution immersed in $\R^3$ whose boundary is given by these circles, there is one which maximizes the first Dirichlet eigenvalue.
If the circles are sufficiently close together, then this surface is unique.
\end{abstract}

\maketitle

\section{Introduction}
Fix two parallel circles $P$ and $Q$ in $\R^3$ centered about a common axis.
Let $\cS$ be the set of all compact smoothly immersed surfaces of revolution in $\R^3$ with two boundary components, given by $P$ and $Q$.
For a surface $\Sigma$ in $\cS$, let $\Delta_\Sigma$ be the Laplace-Beltrami operator on $\Sigma$.
Denote the Dirichlet eigenvalues of $-\Delta_\Sigma$ by
\[
	0 < \lambda_1(\Sigma) \le \lambda_2(\Sigma) \le \lambda_3(\Sigma) \le \ldots
\]
We view the first eigenvalue as a functional $\lambda_1: \cS \to \R$.

\begin{Theorem}
There is an embedded surface $\Sigma^*$ in $\cS$ such that
\[
	\lambda_1(\Sigma^*) = \sup \Big\{ \lambda_1(\Sigma) : \Sigma \in \cS \Big\}
\]
There is an $\epsilon >0$ which depends on $P$ but not on $Q$, such that, if the distance between the circles $P$ and $Q$ is less than $\epsilon$, then there is exactly one maximizing surface $\Sigma^*$ which is connected.
\end{Theorem}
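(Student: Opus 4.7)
The first step is to exploit the axial symmetry to reduce the eigenvalue problem to a one-dimensional weighted Sturm-Liouville problem on the profile curve. Any $\Sigma \in \cS$ is described by a unit-speed profile curve $\gamma(s) = (r(s), z(s))$, $s \in [0,L]$, in the closed half-plane $\{r \ge 0\}$, whose endpoints are prescribed by the circles $P$ and $Q$. Decomposing an eigenfunction of $-\Delta_\Sigma$ into Fourier modes in the angular variable, the non-symmetric modes carry a strictly positive potential $k^2/r^2$, so the first eigenvalue is attained by the rotationally symmetric mode:
\[
  \lambda_1(\Sigma) = \inf_u \frac{\int_0^L r (u')^2 \, ds}{\int_0^L r u^2 \, ds},
\]
the infimum taken over smooth $u$ vanishing at the endpoints. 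The problem thus becomes maximizing the first eigenvalue of a weighted Sturm-Liouville operator over admissible profile curves.

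For existence, a localized test function in the Rayleigh quotient yields a bound $\lambda_1(\Sigma) \le C/L^2$, so any maximizing sequence $\Sigma_n$ has uniformly bounded profile lengths $L_n$. Since $|\gamma_n'| = 1$, the radial coordinates $r_n$ are also uniformly bounded. An Arzel\`a-Ascoli argument then extracts a subsequential limit $\gamma^*$, which I would verify corresponds to an admissible surface $\Sigma^* \in \cS$. Upper semicontinuity of $\lambda_1$ along this convergence gives $\lambda_1(\Sigma^*) \ge \limsup \lambda_1(\Sigma_n)$, so $\Sigma^*$ is a maximizer. Delicate points arise when $\gamma^*$ touches the axis $\{r = 0\}$ or when $L_n$ degenerates, since the weight $r$ may vanish and the one-dimensional problem requires careful interpretation there.

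For uniqueness when $d := d(P, Q)$ is small, the approach is a quantitative perturbation argument. Combining the estimate $\lambda_1(\Sigma) \le C/L^2$ with the constraint $L \ge d$ and with an explicit lower bound for a short conical profile shows that any near-maximizer must have $L$ close to $d$, which in turn forces the profile to be $C^1$-close to the straight segment between the two endpoints. In this narrow neighborhood the Euler-Lagrange equation for the constrained maximization of $\lambda_1$ reduces to a regular perturbation of a model one-dimensional problem, and I would show that the second variation of $\lambda_1$ at the critical profile is strictly negative definite. An implicit function theorem argument in a suitable space of profile curves then yields uniqueness of the critical point, and hence of the maximizer; connectedness follows because a disconnected competitor has $\lambda_1$ equal to the minimum of its components' eigenvalues, which is suboptimal.

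The principal obstacle is to carry out this perturbation analysis with estimates uniform in $Q$, so that the resulting $\epsilon$ depends only on the circle $P$. The function-space norms must be chosen so that the linearized operator admits a bounded right inverse independent of the position of $Q$ near the plane of $P$. A secondary technical issue is the degeneration of the weight $r$ along the axis of revolution, which is handled either by cutoff constructions near $\{r = 0\}$ or by treating the case where the profile curve crosses the axis as a distinct regime.
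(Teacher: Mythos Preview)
Your reduction to the profile curve and the compactness-via-Arzel\`a--Ascoli outline match the paper's existence argument in spirit, but there are two concrete gaps.

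First, the a priori bounds are not in the right order. The inequality $\lambda_1(\Sigma)\le C/L^2$ that you obtain from a sinusoidal test function has $C=\pi^2\max r/\min r$, so it presupposes a uniform lower bound on $r$ along the profile; you cannot derive the length bound first and then the $r$-bound from it. The paper handles this by first proving (Lemma~2.2) that if $F=r$ ranges over $[a,b]$ then $\lambda_1(\gamma)\le\lambda_1(A_{a,b})$, the first Dirichlet eigenvalue of the flat annulus. Combined with the hypothesis $\Lambda>\lambda_1(D_R)$ this forces $r$ to stay in a fixed compact subinterval of $(0,\infty)$ along any near-maximizing sequence, and only then does the length bound (Lemma~2.3) follow. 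Your proposal does not invoke $\Lambda>\lambda_1(D_R)$ anywhere, and without it the curves could drift toward the axis; the ``delicate points'' you flag are in fact excluded a priori by this annulus comparison, not handled by a separate regime.

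Second, ``verify that $\gamma^*$ corresponds to an admissible surface'' hides the bulk of the work. The Arzel\`a--Ascoli limit is only Lipschitz, so $\Sigma^*$ is not yet a smoothly immersed surface in $\cS$. The paper devotes all of Section~3 to upgrading this: a geometric reflection lemma (if the maximizing curve exits a small disc whose boundary it meets twice, inverting through that circle strictly raises $\lambda_1$) forces the limit to be $C^1$; then variational differentiability of $\lambda_1$ along perturbations yields a second-order ODE for $(F,G,\phi)$, and bootstrapping gives smoothness. None of this is routine and your plan does not indicate a mechanism for it.

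For uniqueness your route is genuinely different from the paper's. You propose to localize near the straight segment, compute the second variation of $\lambda_1$, and run an implicit function theorem in a Banach space of profiles. The paper instead treats the Euler--Lagrange system as an initial value problem at $p$ with data $(\Theta,\lambda)$, proves the solution always reaches a first zero of the eigenfunction at some finite arclength $L$ (Section~5), and shows that the endpoint map $(\lambda^{-1/2}\cos\Theta,\lambda^{-1/2}\sin\Theta)\mapsto(F(L),G(L))$ is a $C^1$ diffeomorphism from a small punctured disc onto a punctured neighborhood of $p$. The shooting formulation makes the $Q$-independence of $\epsilon$ automatic: one rescaling by $\sigma=\lambda^{-1/2}$ turns the large-$\lambda$ limit into a regular limit $\sigma\to0$ of a fixed smooth IVP, and the inverse function theorem is applied once at the single point $\sigma=0$. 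Your function-space IFT would instead have to be applied at a moving base point (the segment from $p$ to $q$) with uniform control as $q\to p$, which is exactly the obstacle you identify; the paper's ODE/endpoint-map approach sidesteps it entirely.
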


We remark that there are compact smoothly embedded surfaces in $\R^3$ with two boundary components, given by $P$ and $Q$, which are not surfaces of revolution and have first Dirichlet eigenvalue larger than $\lambda_1(\Sigma^*)$.
This can be proven with Berger's variational formulas \cite{B}.

This problem resonates with the Rayleigh-Faber-Krahn inequality, which states that the flat disc has smaller first Dirichlet eigenvalue than any other domain in $\R^2$ with the same area \cite{F} \cite{K}.
Hersch proved that the canonical metric on $\S^2$ maximizes the first non-zero eigenvalue among metrics with the same area \cite{H}.
Li and Yau showed the canonical metric on $\mathbb{RP}^2$ maximizes the first non-zero eigenvalue among metrics with the same area \cite{LY}.
Nadirashvili proved the same is true for the flat equilateral torus, whose fundamental parallelogram is comprised of two equilateral triangles \cite{N}.
It is not known if there is such a maximal metric on the Klein bottle, but Jakobson, Nadirashvili, and Polterovich showed there is a critical metric \cite{JNP}.
El Soufi, Giacomini, and Jazar proved this is the only critical metric on the Klein bottle \cite{EGJ}.

On a compact orientable surface, Yang and Yau obtained upper bounds, depending on the genus, for the first non-zero eigenvalue among metrics of the same area \cite{YY}.
Li and Yau extended these bounds to compact non-orientable surfaces \cite{LY}.
However, Urakawa showed that there are metrics on $\S^3$ with volume one and arbitrarily large first non-zero eigenvalue \cite{U}.
Colbois and Dodziuk extended this to any manifold of dimension three or higher \cite{CD}.

For a closed compact hypersurface in $\R^{n+1}$, Chavel and Reilly obtained upper bounds for the first non-zero eigenvalue in terms of the surface area and the volume of the enclosed domain \cite{Ch,R}.
Abreu and Freitas proved that for a metric on $\S^2$ which can be isometrically embedded in $\R^3$ as a surface of revolution, the first $\S^1$-invariant eigenvalue is less than the first Dirichlet eigenvalue on a flat disc with half the area \cite{AF}.
Colbois, Dryden, and El Soufi extended this to $O(n)$-invariant metrics on $\S^n$ which can be isometrically embedded in $\R^{n+1}$ as hypersurfaces of revolution \cite{CDE}.

We conclude this section by reformulating the theorem.
Let $R$ be the maximum of the radii of $P$ and $Q$.
Let $D_R$ be a disc in $\R^2$ with radius $R$.
If $P$ and $Q$ are not coplanar, then there is a surface in $\cS$ comprised of two flat discs and its first Dirichlet eigenvalue is $\lambda_1(D_R)$.
If $P$ and $Q$ are coplanar, then there is an annulus in $\cS$ and its first Dirichlet eigenvalue is larger than $\lambda_1(D_R)$.
In either case,
\[
	\sup \Big\{ \lambda_1(\Sigma) : \Sigma \in \cS \Big\} \ge \lambda_1(D_R)
\]
Note if $\Sigma$ is a surface in $\cS$, then either there is a connected component of $\Sigma$ which is diffeomorphic to a cylinder or there are two connected components of $\Sigma$ which are diffeomorphic to discs.

\begin{Lemma}
\label{topodisc}
Let $\Sigma$ be a surface in $\cS$.
Assume that $\Sigma$ has a connected component which is diffeomorphic to a disc.
Then
\[
	\lambda_1(\Sigma) \le \lambda_1(D_R)
\]
\end{Lemma}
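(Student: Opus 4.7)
My plan is to first use the topological dichotomy noted just above the lemma to express $\Sigma$ as a disjoint union of two disc components, and then to control their eigenvalues via a test function. The disc component has exactly one boundary circle, which must be $P$ or $Q$, so the dichotomy forces the other boundary circle to lie on a second disc component; thus (ignoring any closed components, which make $\lambda_1(\Sigma) = 0$ and so trivialize the inequality) $\Sigma = \Sigma_P \sqcup \Sigma_Q$, where $\Sigma_P$ and $\Sigma_Q$ are discs with boundaries $P$ and $Q$. Then $\lambda_1(\Sigma) = \min(\lambda_1(\Sigma_P), \lambda_1(\Sigma_Q))$, and since $R = \max(r_P, r_Q)$ by definition, without loss of generality $r_Q = R$, so it suffices to prove the key claim $\lambda_1(\Sigma_Q) \le \lambda_1(D_R)$.

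I reduce this claim to a one-dimensional calculation. Since the first eigenfunction on $\Sigma_Q$ is sign-definite and $\Sigma_Q$ is rotationally symmetric, the first eigenfunction is rotationally invariant. Parametrize $\Sigma_Q$ by arclength $s \in [0, L]$ along the generating curve in the meridian half-plane, which gives the metric $ds^2 + r(s)^2 d\theta^2$ with $r(0) = 0$ and $r(L) = R$, together with $|r'(s)| \le 1$ from the arclength parametrization. As test function I take $v(s) = \phi(r(s))$ where $\phi(\rho) = J_0(j_{0,1}\rho/R)$ is the first Dirichlet eigenfunction on $D_R$, so that $v$ vanishes on $\partial \Sigma_Q$, and the goal is to show $\lambda_1(D_R) \int_{\Sigma_Q} v^2 \, dA \ge \int_{\Sigma_Q} |\nabla v|^2 \, dA$.

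When $r$ is monotone, the change of variable $\rho = r(s)$ and the eigenfunction identity $\int_0^R (\lambda_1(D_R) \phi^2 - \phi'^2) \rho \, d\rho = 0$ reduce the desired inequality to
\[
\lambda_1(D_R) \int_0^R \phi(\rho)^2 \rho \Bigl(\tfrac{1}{|r'|} - 1\Bigr) d\rho + \int_0^R \phi'(\rho)^2 \rho \bigl(1 - |r'|\bigr) d\rho \ge 0,
\]
which follows immediately from $|r'| \le 1$. The main obstacle will be the non-monotone case: the coarea formula replaces $|r'|$ and $1/|r'|$ by the sums $a(\rho) = \sum_{r(s) = \rho} |r'(s)|$ and $b(\rho) = \sum_{r(s) = \rho} 1/|r'(s)|$, and although $a \le b$ always, this alone is not enough to conclude. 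I expect to address this by truncating $\phi$ to vanish outside $[0, R]$, decomposing $[0, L]$ into maximal monotone pieces of $r$, and applying the monotone argument piecewise while exploiting the specific sign structure of $\lambda_1(D_R) \phi(\rho)^2 - \phi'(\rho)^2$ on $[0, R]$ to control contributions from non-monotone excursions.
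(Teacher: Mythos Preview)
Your reduction to the disc component with boundary radius $R$ and the monotone case are both fine, but the non-monotone case is a genuine gap, and the fix you sketch will not go through: the test function $v(s)=\phi(r(s))$ can have Rayleigh quotient strictly larger than $\lambda_1(D_R)$. The sign structure you hope to exploit is in fact the obstruction. Near $\rho=R$ one has $\lambda_1(D_R)\,\phi(\rho)^2-\phi'(\rho)^2<0$, so a generating curve that lingers near $r=R$ picks up a large negative contribution to $\lambda_1(D_R)\int v^2\,r\,ds-\int |v'|^2\,r\,ds$. Concretely, let $r$ rise monotonically from $0$ to $R-\delta$, then set $r(s)=R-\delta+\epsilon\sin\bigl(k(s-s_0)\bigr)$ on an interval of length $\ell$, and finally rise to $R$; with $\epsilon k\le 1$ (so $|r'|\le 1$) and $\epsilon,\delta$ of order $1/k$, the oscillatory block contributes roughly $|\phi'(R)|^2 R\,\ell/2$ to the numerator but only $|\phi'(R)|^2 R\,\delta^2\ell$ to the denominator, so the Rayleigh quotient of $v$ tends to order $k^2$ as $\ell\to\infty$. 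Decomposing into monotone pieces does not help, because on a monotone piece over $[\rho_1,\rho_2]\subset(0,R)$ the bound $|r'|\le 1$ only gives $\int_{\rho_1}^{\rho_2}\bigl[\lambda_1(D_R)\phi^2-\phi'^2\bigr]\rho\,d\rho$, which has no sign in general.

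The paper's argument avoids a global test function entirely. It restricts the generating curve $\gamma$ (arclength-parametrized, with $F(0)=R$ and $F(t)\to 0$ as $t\to L$) to $[0,d]$, invokes the annulus comparison Lemma~\ref{Fbound} to get $\lambda_1(\Sigma)\le\lambda_1(\gamma|_{[0,d]})\le\lambda_1(A_{a_d,b_d})$ where $[a_d,b_d]$ is the range of $F$ on $[0,d]$, and then lets $d\to L$ so that $a_d\to 0$ and uses $\lim_{a\to 0}\lambda_1(A_{a,R})=\lambda_1(D_R)$. The point is that Lemma~\ref{Fbound} handles non-monotone $F$ internally (via a sunrise argument that passes to a monotone subcurve and uses the annulus eigenfunction there), so one never has to control a globally defined test function. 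If you want to salvage your approach, the cleanest route is to imitate this: pick a single monotone sub-arc of the generating curve whose $r$-range is $[a,R]$ for $a$ small, use the first Dirichlet eigenfunction of $A_{a,R}$ pulled back along that sub-arc (extended by zero elsewhere), and then let $a\to 0$.
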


In light of this lemma, we only need to consider surfaces in $\cS$ which are connected.
Naturally, we can restate the theorem in terms of curves in a half-plane.
Fix a plane in $\R^3$ containing the axis of symmetry of $P$ and $Q$.
Identify $\R^2$ with this plane in such a way that the axis of symmetry is identified with
\[
	\Big\{ (x,y) \in \R^2 : x =0 \Big\}
\]
Define
\[
	\R^2_+ = \Big\{ (x,y) \in \R^2 : x > 0 \Big\}
\]
Let $p$ be the point where the circle $P$ intersects $\R^2_+$.
Similarly, let $q$ be the point where $Q$ intersects $\R^2_+$.
Let $\cC$ be the set of smooth, regular curves $\gamma: [0,1] \to \R^2_+$ with $\gamma(0)=p$ and $\gamma(1)=q$.
If $\gamma$ is a curve in $\cC$, write $\gamma=(F,G)$ and define
\[
	\lambda_1(\gamma) = \min \Bigg\{ \frac{ \int_0^1 \frac{|w'|^2 F}{|\gamma '|} \,dt}{ \int_0^1 |w|^2 F | \gamma' | \,dt} : w \in C_0^\infty(0,1) \Bigg\}
\]
If $\Sigma$ is a connected surface in $\cS$ and $\gamma$ is a curve in $\cC$ which parametrizes $\Sigma \cap \R^2_+$, then
\[
	\lambda_1(\Sigma) = \lambda_1(\gamma)
\]

\begin{Lemma}
\label{exreg}
Assume that
\[
	\sup \Big\{ \lambda_1(\gamma) : \gamma \in \cC \Big\} > \lambda_1(D_R)
\]
Then there is a simple curve $\alpha$ in $\cC$ such that
\[
	\lambda_1(\alpha) = \sup \Big\{ \lambda_1(\gamma) : \gamma \in \cC \Big\}
\]
\end{Lemma}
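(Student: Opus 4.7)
The plan is to apply the direct method of the calculus of variations. Take a maximizing sequence $\gamma_n \in \cC$; by the reparametrization invariance of $\lambda_1$, parametrize each $\gamma_n = (F_n, G_n)$ by arc length on $[0, L_n]$ with $|\gamma_n'| = 1$. In these coordinates the Rayleigh quotient is that of the weighted Sturm--Liouville problem $-(F_n u')' = \lambda F_n u$ on $[0, L_n]$ with Dirichlet boundary, so $\lambda_1(\gamma_n)$ depends only on $F_n$ viewed as a function of arc length, not on the transverse component $G_n$. Note $|F_n'|, |G_n'| \le 1$ and $F_n(0) = p_x$, $F_n(L_n) = q_x$ are fixed.

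The first key step is a uniform length bound $L_n \le L^*$. This is forced by the hypothesis $\lambda_1(\gamma_n) > \lambda_1(D_R) > 0$: a suitable compactly supported test function (such as $w(s) = \sin(\pi s / L_n)$, or its restriction to a subinterval where $F_n$ is comparable to its average), combined with the Lipschitz bound $|F_n'| \le 1$ and the fixed endpoint values, shows that $\lambda_1(\gamma_n) \to 0$ whenever $L_n \to \infty$. After rescaling to $[0,1]$, the $\gamma_n$ become uniformly $L^*$-Lipschitz with image in a fixed compact subset of $\R^2$, so Arzel\`a--Ascoli yields a subsequential uniform limit $\alpha: [0,1] \to \overline{\R^2_+}$ with $\alpha(0) = p$ and $\alpha(1) = q$.

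Next I would argue that $\alpha$ stays in $\R^2_+$ at interior points: if $\alpha$ met the axis, the limiting surface would pinch off and contain a component diffeomorphic to a disc, so a limiting version of Lemma \ref{topodisc} would force $\lambda_1(\alpha) \le \lambda_1(D_R)$, contradicting the hypothesis. Upper semicontinuity of $\lambda_1$ under this mode of convergence (verified by a direct test-function comparison) then gives $\lambda_1(\alpha) \ge \sup$, hence equality. To upgrade to a \emph{simple} maximizer, I would exploit that $\lambda_1$ depends only on the arc-length profile of $F$: retain the $F$-component of $\alpha$ and redefine the height $\tilde G$ by choosing the sign of $\tilde G' = \pm\sqrt{1 - (F')^2}$ so that $\tilde G$ is piecewise monotone in a pattern avoiding self-intersection while still satisfying $\tilde G(L) - \tilde G(0) = q_y - p_y$. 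The resulting simple curve $\tilde\alpha$ has the same eigenvalue, and smoothness/regularity follows from the Euler--Lagrange equation for the Rayleigh quotient.

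The main obstacle is the length bound. The test-function estimate is not immediate because $F_n$ can in principle grow linearly with $L_n$ (curves wandering far from the axis), and $\min F_n$ can simultaneously degenerate toward $0$, so one must use the Lipschitz structure of $F_n$ rather carefully, perhaps by splitting $[0, L_n]$ into subintervals according to where $F_n$ attains its extremes. A secondary delicate point is the simple-curve reconstruction: the sign choices for $G'$ must simultaneously avoid self-intersection, match the prescribed net change in height, and preserve regularity at the switching points, which is a purely geometric construction independent of the variational analysis.
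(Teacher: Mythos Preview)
Your outline captures the compactness skeleton correctly, but there are two genuine gaps and one place where your route diverges from the paper's in a way that creates extra work.

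\textbf{The length bound.} You correctly flag this as the main obstacle, but your proposed fix (subinterval splitting) does not close it. The actual mechanism is a two-step bound: first control the \emph{range} of $F_n$, then the length. The paper compares $\lambda_1(\gamma_n)$ with the first Dirichlet eigenvalue of a flat concentric annulus $A_{a,b}$ whenever the image of $F_n$ contains $[a,b]$ (Lemma~\ref{Fbound}); together with $\lambda_1(D_R)=\inf_{0<a<R}\lambda_1(A_{a,R})$ this forces $F_n$ to stay in a fixed compact interval $[a_0,b_0]\subset(0,\infty)$ once $\lambda_1(\gamma_n)>\lambda_1(D_R)$. Only then does the test function $\sin(\pi s/L_n)$ give $L_n\le \pi\sqrt{b_0/(a_0\lambda_1(\gamma_n))}$. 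Without the annulus comparison you have no way to rule out $F_n\to 0$ or $F_n\to\infty$ along subarcs, and your test-function estimate stalls exactly where you say it does. The same annulus bound is what keeps the limit curve off the axis, replacing your limiting appeal to Lemma~\ref{topodisc}.

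\textbf{Regularity.} ``Smoothness follows from the Euler--Lagrange equation'' hides the hardest part. The limit $\alpha$ is a priori only Lipschitz with $|\alpha'|=L$ a.e., and you cannot differentiate $s\mapsto\lambda_1(\alpha_s)$ along smooth variations until you know $\alpha$ is at least $C^1$. The paper bridges this with a geometric step (Lemma~\ref{circles}): if the maximizer enters and exits a small disc, it must stay inside, because circular inversion in the boundary circle would otherwise strictly increase $\lambda_1$. This forces first differentiability, then $C^1$; only afterwards does the variational argument produce the ODE system and the bootstrap to $C^\infty$. Your proposal has no substitute for this Lipschitz-to-$C^1$ step.

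\textbf{Simplicity.} Your idea of keeping $F$ and re-choosing the signs of $G'$ is different from the paper and is not obviously feasible: you must simultaneously (i) hit the prescribed height increment $q_y-p_y$ exactly, (ii) avoid self-intersection, and (iii) keep the curve smooth at the sign switches. The paper instead shows directly that any constant-speed maximizer is injective: if $\alpha(s)=\alpha(u)$ with $s<u$, excising the loop and transplanting the eigenfunction strictly raises the Rayleigh quotient, contradicting maximality (Lemma~\ref{inject}). That argument is short and avoids the reconstruction problem entirely.
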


This lemma implies the existence part of the theorem.
To prove this lemma, we take a maximizing sequence of curves.
We first prove that the curves in the sequence have bounded length and stay in a compact subset of $\R^2_+$.
Reparametrizing the curves by arc length, we obtain a subsequence which converges uniformly to a Lipschitz continuous curve.
We can extend the eigenvalue functional to Lipschitz curves.
By an upper semi-continuity argument, the limit curve is maximizing.
It then remains to prove that the limit curve is smooth.
The first step is to show that the maximizing curve is continuously differentiable.
The proof of this will be based on the following geometric fact: if the maximizing curve intersects a small circle at two points, then it must stay inside the circle.
Otherwise, reflecting the curve inside the circle would result in a curve with a greater eigenvalue.
Once this amount of regularity is established, we consider variations of the curve and prove that the eigenvalue is differentiable along these variations.
This enables us to derive a differential equation which the maximizing curve solves weakly.
Then it is straightforward to use the differential equation to see that the curve is smooth.

\begin{Lemma}
\label{unique}
There is an $\epsilon >0$ which depends on $p$ but not on $q$, such that, if the distance between $p$ and $q$ is less than $\epsilon$, then the curve $\alpha$ in $\cC$ which satisfies
\[
	\lambda_1(\alpha) = \sup \{ \lambda_1(\gamma) : \gamma \in \cC \}
\]
is unique, up to reparametrization.
\end{Lemma}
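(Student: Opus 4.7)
I treat $\varepsilon = |p-q|$ as a small parameter and show that in this regime any maximizer $\alpha$ is close, after rescaling, to the straight segment $\ell_{pq}$ from $p$ to $q$; uniqueness then follows from an implicit function theorem argument.

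The first step is a length asymptotic. Write $p = (r_p, h_p)$ and fix a compact neighborhood $K \subset \R^2_+$ of $p$ on which the first coordinate lies in $[r_p/2, 2r_p]$. For $\gamma \in \cC$ reparametrized by arclength and staying in $K$, the Rayleigh quotient with test function $\sin(\pi s/L(\gamma))$ together with the lower bound
\[
\lambda_1(\gamma) \ge \frac{\inf F}{\sup F} \cdot \frac{\pi^2}{L(\gamma)^2}
\]
yield $\lambda_1(\gamma) = (\pi^2/L(\gamma)^2)\bigl(1 + O(L/r_p)\bigr)$, with implicit constants depending only on $p$. A localized bump test function shows that curves leaving $K$ have $\lambda_1 \le C(p)$ independent of $\varepsilon$. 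Since $\ell_{pq} \in \cC$ with $L(\ell_{pq}) = \varepsilon$, the supremum satisfies $\sup_\cC \lambda_1 \ge \pi^2/\varepsilon^2 - O(1/\varepsilon)$. Combining these estimates, any maximizer $\alpha$, for $\varepsilon$ below a threshold depending only on $p$, lies in $K$ and satisfies $L(\alpha) - \varepsilon = O(\varepsilon^2)$.

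To convert this length excess into quantitative closeness, let $T = (q-p)/\varepsilon$ denote the unit tangent of $\ell_{pq}$. The identity
\[
\int_0^{L(\alpha)} |\alpha'(s) - T|^2 \, ds = 2 \bigl( L(\alpha) - \varepsilon \bigr)
\]
for arclength-parametrized $\alpha$ sharing endpoints with $\ell_{pq}$, combined with the Poincaré inequality on $[0, L(\alpha)]$, gives $\|\alpha - \ell_{pq}\|_{L^\infty} = o(\varepsilon)$. The Euler-Lagrange equation from Lemma \ref{exreg} and standard elliptic bootstrapping upgrade this to $\|\alpha - \ell_{pq}\|_{C^k} = o(\varepsilon)$ for every $k$, after appropriate rescaling.

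For uniqueness, I parametrize curves in a Hölder neighborhood of $\ell_{pq}$ by their transverse deviation $\beta: [0,1] \to \R^2$ with $\beta(0) = \beta(1) = 0$. The Euler-Lagrange equation, coupled with the eigenfunction equation $-(F w')' = \lambda F w$, becomes a nonlinear operator equation $\Psi_\varepsilon(\beta) = 0$ on $C^{2,\mu}_0([0,1]; \R^2)$ depending smoothly on $\varepsilon$. In the limit $\varepsilon \to 0$ the problem reduces to the geodesic equation with zero boundary data, so $D\Psi_0$ is essentially $-\ddot\beta$, invertible. The implicit function theorem furnishes a unique small solution $\beta$ for every $\varepsilon$ below a threshold depending only on $p$, and hence a unique maximizer up to reparametrization. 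The chief technical obstacle is that the eigenfunction $w_\alpha$ enters $\Psi_\varepsilon$ nonlocally: smoothness of the pair $(\lambda_1(\alpha), w_\alpha)$ in $\alpha$ requires $\lambda_1$ to remain a simple eigenvalue for curves near $\ell_{pq}$, which follows from analytic perturbation theory applied to the limit problem $-\ddot w = \lambda w$ on $[0,1]$ with Dirichlet boundary, whose first eigenvalue $\pi^2$ is simple.
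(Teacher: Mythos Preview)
Your strategy---show any maximizer is $C^k$-close to the rescaled segment $\ell_{pq}$, then apply an infinite-dimensional implicit function theorem to the Euler--Lagrange equation---is a plausible alternative, and the limiting picture you identify (straight line, eigenfunction $\sin(\pi t)$, eigenvalue $\pi^2$) is exactly the one the paper exploits. The paper, however, takes a more direct route: by Lemma~4.1 any maximizing curve in arclength satisfies an explicit ODE system in $(v,\theta,F,G)$ determined by the initial angle $\Theta$ and the eigenvalue $\lambda$, so critical curves emanating from $p$ form a \emph{two-parameter} family. The map $\Phi$ sending $\lambda^{-1/2}(\cos\Theta,\sin\Theta)$ to the curve's other endpoint is then shown in Lemma~4.3 to be a $C^1$ diffeomorphism near the origin, by rescaling the ODE (the paper's $\sigma$ plays the role of your $\varepsilon$) and checking that at $\sigma=0$ the differential of $\Phi$ is $\pi$ times the identity. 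Uniqueness is immediate: two maximizers with the same $\Lambda$ would give two $\Phi$-preimages of $q$.

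Your proposal has gaps that the finite-dimensional shooting reduction avoids. Parametrizing by a deviation $\beta:[0,1]\to\R^2$ introduces a reparametrization gauge---tangential components of $\beta$ do not change the geometric curve---so $D\Psi_\varepsilon$ cannot be invertible on all of $C^{2,\mu}_0([0,1];\R^2)$ without a gauge fix (restricting to normal $\beta$, or imposing constant speed, which is a nonlinear constraint on $\beta$). The claim that $D\Psi_0$ is ``essentially $-\ddot\beta$'' also needs an actual computation: the Euler--Lagrange system (this is Lemma~3.8, not Lemma~1.2 as you cite) couples the curve to the eigenfunction through the coefficient $|\phi'|^2+\Lambda L^2\phi^2$, and you have not verified that the contribution from varying $(\lambda,\phi)$ with $\beta$ drops out or is lower order at $\varepsilon=0$. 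These issues are repairable, but as written the argument is a sketch; the paper's approach is cleaner precisely because it absorbs the eigenfunction into the ODE and reduces the implicit function theorem to a two-dimensional derivative.
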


This lemma establishes the uniqueness part of the theorem.
To establish this lemma, we look at the differential equation which describes curves which are critical points of the eigenvalue functional.
We view this as an initial value problem originating at $p$ and apply the shooting method.
This yields a family of critical curves with one endpoint at $p$, parametrized by their tangent vector $v$ at $p$ and their eigenvalue $\lambda$.
Let $q(v,\lambda)$ denote the other endpoint.
We prove that the restriction of the function $q$ to large $\lambda$ is a diffeomorphism onto a small punctured neighborhood of $p$.
This implies the lemma.

\section{Existence}

In this section we will prove Lemma \ref{topodisc} and a low regularity version of Lemma \ref{exreg}.
We first extend the domain of the functional to Lipschitz curves.
For real numbers $c<d$, let $\Lip_0(c,d)$ denote the set of Lipschitz continuous functions $w:[c,d] \to \R$ which vanish at the endpoints.
For a Lipchitz continuous function $\gamma: [c,d] \to \R^2_+$ which is not constant, define
\[
	\lambda_1(\gamma) = \inf \bigg\{ \frac{ \int_c^d \frac{| w' |^2 F}{|\gamma'|} \,dt}{\int_c^d | w |^2 F |\gamma'| \,dt} : w \in \Lip_0(c,d) \bigg\}
\]
We remark that the quotient may be infinite, for some choices of $\gamma$ and $w$.
However, it follows from Lemma \ref{arcparam} that $\lambda_1(\gamma)$ is finite for any non-constant Lipschitz continuous $\gamma:[c,d] \to \R^2_+$.

Let $\cR$ be the set of Lipschitz continuous functions $\gamma: [0,1] \to \R^2_+$ with $\gamma(0)=p$ and $\gamma(1)=q$.
Note that if $\gamma$ is a smooth curve in $\cC$, then this definition of $\lambda_1(\gamma)$ agrees with the previous one.
Define
\[
	\Lambda = \sup \Big\{ \lambda_1( \gamma ) : \gamma \in \cR \Big\}
\]
In this section we prove that there is a curve $\alpha$ in $\cR$ such that
\[
	\lambda_1(\alpha) = \Lambda
\]
We take a maximizing sequence in $\cR$ and obtain a convergent subsequence.
Then we will see that the limit curve is maximizing.

In the following lemma, we will prove that reparametrizing a curve to have constant speed does not decrease the eigenvalue.

\begin{Lemma}
\label{arcparam}
Let $\gamma: [c,d] \to \R^2_+$ be Lipschitz continuous.
Let $L$ be the length of $\gamma$.
Define $\ell:[c,d] \to [0,1]$ by
\[
	\ell(t) = \frac{1}{L} \int_c^t | \gamma'(u)| \,du
\]
There is a Lipschitz continuous function $\beta: [0,1] \to \R^2_+$ such that, for all $t$ in $[c,d]$,
\[
	\beta(\ell(t)) = \gamma(t)
\]
For almost all $t$ in $[0,1]$,
\[
	| \beta'(t) | = L
\]
Moreover,
\[
	\lambda_1(\beta) \ge \lambda_1(\gamma)
\]
\end{Lemma}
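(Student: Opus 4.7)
The plan is to build $\beta$ directly from $\gamma$ and $\ell$, verify the stated regularity, and then compare the Rayleigh quotients via the substitution $s = \ell(t)$. The function $\ell$ is absolutely continuous and nondecreasing, with $\ell(c)=0$ and $\ell(d)=1$, so it is a surjection onto $[0,1]$. On any subinterval of $[c,d]$ where $\ell$ is constant, $|\gamma'|$ vanishes almost everywhere and therefore $\gamma$ is constant on that interval. Thus the formula $\beta(s) = \gamma(t)$ for any $t \in \ell^{-1}(s)$ is well defined, and $\beta \circ \ell = \gamma$. If $s_1 = \ell(t_1) \le \ell(t_2) = s_2$, then
\[
|\beta(s_2) - \beta(s_1)| = |\gamma(t_2) - \gamma(t_1)| \le \int_{t_1}^{t_2} |\gamma'(u)|\,du = L(s_2-s_1),
\]
so $\beta$ is Lipschitz with constant $L$ and in particular $|\beta'| \le L$ almost everywhere. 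For the constant speed property, the chain rule applied on the full-measure subset where $\gamma$, $\beta$, and $\ell$ are classically differentiable gives $\gamma'(t) = \beta'(\ell(t))\,\ell'(t)$. Since $\ell' = |\gamma'|/L$ almost everywhere, this forces $|\beta'(\ell(t))| = L$ wherever $\ell'(t) > 0$, and the area formula (or a direct change of variables) pushes this equality forward to yield $|\beta'(s)| = L$ for almost every $s$ in $[0,1]$.

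For the inequality $\lambda_1(\beta) \ge \lambda_1(\gamma)$, I would show that every Rayleigh quotient for $\beta$ is matched by a Rayleigh quotient for $\gamma$. Given $w \in \Lip_0(0,1)$, set $\tilde{w} = w \circ \ell$, which lies in $\Lip_0(c,d)$ with $\tilde{w}'(t) = w'(\ell(t))\,\ell'(t)$ almost everywhere. Write $F$ for the first coordinate of $\gamma$ and $\tilde F$ for the first coordinate of $\beta$, so that $F = \tilde F \circ \ell$. The substitution $s = \ell(t)$ is valid on the support of the integrands and gives $|\gamma'|\,dt = L\,ds$; a short calculation then yields
\[
\int_c^d \frac{|\tilde w'|^2 F}{|\gamma'|}\,dt = \int_0^1 \frac{|w'|^2 \tilde F}{|\beta'|}\,ds, \qquad \int_c^d |\tilde w|^2 F |\gamma'|\,dt = \int_0^1 |w|^2 \tilde F |\beta'|\,ds.
\]
Hence the Rayleigh quotient of $w$ with respect to $\beta$ equals the Rayleigh quotient of $\tilde w$ with respect to $\gamma$, which is bounded below by $\lambda_1(\gamma)$. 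Taking the infimum over $w$ gives the desired inequality.

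The main obstacle is executing the chain rule and change of variables cleanly when $\ell$ is merely Lipschitz and may have plateaus, since $\beta$ is then defined only up to identification of those plateaus and the classical substitution theorem requires some care. The saving point is that intervals where $\ell$ is constant carry no arc length of $\gamma$, so they contribute nothing to either integral in the Rayleigh quotient, and their images under $\ell$ are single points of measure zero in $[0,1]$. All of the integration effectively happens on the full-measure set $\{\ell' > 0\}$, where $\ell$ is locally invertible in a measure-theoretic sense and the standard change-of-variables formula applies.
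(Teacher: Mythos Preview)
Your proposal is correct and follows essentially the same route as the paper: construct $\beta$ as $\gamma$ composed with a right inverse of $\ell$, and for the eigenvalue inequality pull back an arbitrary test function $w\in\Lip_0(0,1)$ via $\ell$ to obtain a test function for $\gamma$ with the same Rayleigh quotient. The paper is terser---it simply defines $\eta(s)=\min\{t:\ell(t)=s\}$, sets $\beta=\gamma\circ\eta$, and asserts the change of variables---whereas you spell out why $\beta$ is well defined on plateaus, why it is $L$-Lipschitz, and why the substitution is legitimate despite $\ell$ not being a diffeomorphism; but the underlying argument is identical.
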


\begin{proof}
Define $\eta:[0,1] \to \R$ by
\[
	\eta(s) = \min \Big\{ t \in [c,d] : \ell(t)=s \Big\}
\]
Note that $\eta$ may not be continuous, but $\beta = \gamma \circ \eta$ is Lipschitz continuous, and for all $t$ in $[c,d]$,
\[
	\beta(\ell(t)) = \gamma(t)
\]
For almost all $t$ in $[0,1]$,
\[
	| \beta'(t) | = L
\]
Note that $\lambda_1(\beta)$ is finite.
Write $\gamma=(F_\gamma,G_\gamma)$ and $\beta=(F_\beta, G_\beta)$.
Now let $\epsilon>0$.
There is a function $w$ in $\Lip_0(0,1)$ which satisfies
\[
	\frac{ \int_0^1 \frac{| w' |^2 F_\beta}{|\beta'|} \,dt}{\int_0^1 | w |^2 F_\beta |\beta'| \,dt} < \lambda_1(\beta) + \epsilon
\]
Define $v = w \circ \ell$.
Then $v$ is in $\Lip(c,d)$.
Changing variables yields
\[
	\lambda_1(\gamma) \le \frac{ \int_c^d \frac{| v' |^2 F_\gamma}{|\gamma'|} \,dt}{\int_c^d | v |^2 F_\gamma |\gamma'| \,dt} = \frac{\int_0^1 \frac{| w' |^2 F_\beta}{|\beta'|} \,dt}{\int_0^1 | w |^2 F_\beta |\beta'| \,dt} < \lambda_1(\beta) + \epsilon \\
\]
\end{proof}

For positive numbers $a<b$, let $A_{a,b}$ be the concentric annulus in $\R^2$ centered about the origin with outer radius $b$ and inner radius $a$.
Let $\lambda_1(A_{a,b})$ be the first Dirichlet eigenvalue of the Laplace operator on $A_{a,b}$.

\begin{Lemma}
\label{Fbound}
Let $I$ be a compact interval and let $\gamma:I \to \R^2_+$ be Lipschitz.
Write $\gamma=(F,G)$.
If the image of $F$ is $[a,b]$, with $a<b$, then
\[
	\lambda_1(\gamma) \le \lambda_1(A_{a,b})
\]
\end{Lemma}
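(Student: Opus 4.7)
The plan is to bound the Rayleigh quotient defining $\lambda_1(\gamma)$ by comparing against a trial function built from the first radial Dirichlet eigenfunction $\phi : [a,b] \to \R$ of the annulus $A_{a,b}$, which vanishes at $a$ and $b$, is positive in between, and satisfies
\[
	\lambda_1(A_{a,b}) = \frac{\int_a^b r |\phi'(r)|^2 \,dr}{\int_a^b r \phi(r)^2 \,dr}.
\]
Since $F$ surjects onto $[a,b]$, I would pick $t_1, t_2 \in I$ with $F(t_1) = a$ and $F(t_2) = b$, and after possibly reversing $\gamma$ assume $t_1 < t_2$.

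The naive test function $\phi \circ F$ on $[t_1, t_2]$, extended by zero, is in the correct class, but its Rayleigh quotient estimate is inconclusive when $F$ oscillates: the change of variables $r = F(t)$ introduces the multiplicity $\#F^{-1}(r)$ into both the numerator and the denominator, and these do not combine favorably against the annulus Rayleigh quotient. To bypass this I would replace $F$ by its monotone lower envelope
\[
	\tilde F(t) = \min_{s \in [t, t_2]} F(s),
\]
a Lipschitz non-decreasing function on $[t_1, t_2]$ running from $a$ to $b$ and satisfying $\tilde F \le F$.

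The pointwise fact that makes everything work is that on $\{\tilde F' > 0\}$ the minimum defining $\tilde F(t)$ is attained at $s=t$, so $\tilde F(t) = F(t)$ and $\tilde F'(t) = F'(t) \ge 0$ there; in particular $0 \le \tilde F' \le |F'| \le |\gamma'|$ almost everywhere. With trial function $w = \phi \circ \tilde F$ on $[t_1, t_2]$ and zero elsewhere, on $\{\tilde F' > 0\}$ the numerator integrand is
\[
	\phi'(\tilde F)^2 (\tilde F')^2 \frac{F}{|\gamma'|} = \phi'(\tilde F)^2 \frac{(F')^2 \tilde F}{|\gamma'|} \le \phi'(\tilde F)^2 \tilde F' \tilde F,
\]
and on $\{\tilde F' = 0\}$ it vanishes; for the denominator, $F \ge \tilde F$ and $|\gamma'| \ge \tilde F'$ give $F|\gamma'| \ge \tilde F \tilde F'$. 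The substitution $r = \tilde F(t)$ then converts both bounds into integrals over $[a,b]$ against the measure $r\,dr$, giving numerator $\le \int_a^b r |\phi'(r)|^2 \,dr$ and denominator $\ge \int_a^b r \phi(r)^2 \,dr$, which yields $\lambda_1(\gamma) \le \lambda_1(A_{a,b})$.

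The main obstacle is the non-monotonicity of $F$, which would derail a direct change-of-variables argument; the monotone envelope $\tilde F$ is precisely designed so that the substitution is unambiguous, and the coincidence $\tilde F = F$ on the set where $\tilde F$ strictly increases is exactly what makes the numerator bound sharp.
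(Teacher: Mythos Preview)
Your proof is correct and shares its pivotal idea with the paper's: replace $F$ on a subinterval where it runs from $a$ to $b$ by its monotone lower envelope $\tilde F$, and exploit the fact that wherever $\tilde F$ is strictly increasing one has $\tilde F=F$ and $\tilde F'=F'$ (this is exactly what the paper extracts from the Riesz sunrise lemma).

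The execution, however, is different. The paper never names the annulus eigenfunction; instead it passes through a chain of auxiliary curves
\[
\gamma \;\rightsquigarrow\; \alpha=(F,0) \;\rightsquigarrow\; \zeta=(\tilde F,0) \;\rightsquigarrow\; \beta(t)=((1-t)a+tb,0),
\]
showing $\lambda_1$ is nondecreasing along the chain (the first step drops $G$, the second uses the envelope, and the last invokes the constant-speed reparametrization lemma), and only at the end identifies $\lambda_1(\beta)=\lambda_1(A_{a,b})$. You instead pull back the radial eigenfunction $\phi$ of $A_{a,b}$ through $\tilde F$ to build a single explicit test function and bound its Rayleigh quotient directly by the change of variables $r=\tilde F(t)$. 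Your route is shorter and self-contained (no appeal to the reparametrization lemma), while the paper's chain is more modular and keeps the annulus spectral theory entirely in the background. Both rest on the same sunrise-type observation, so I would call this the same approach with a more direct implementation.
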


\begin{proof}
Let $\alpha: I \to \R^2_+$ be the Lipschitz function defined by $\alpha=(F,0)$.
For any function $w$ in $\Lip_0(I)$ which is not identically zero,
\[
	\frac{\int_I \frac{|w'|^2 F}{| \gamma'|} \,dt}{\int_I w^2 F | \gamma' | \,dt} \le \frac{\int_I \frac{|w'|^2 F}{| \alpha'|} \,dt}{\int_I w^2 F | \alpha' | \,dt}
\]
Therefore $\lambda_1(\gamma) \le \lambda_1(\alpha)$.
Let $[c,d]$ be a subinterval of $I$ such that the image of $\{c,d\}$ under $F$ is $\{a,b\}$.
Assume that $F(t)$ is in $(a,b)$ for every $t$ in $(c,d)$.
For simplicity, we assume that $F(c)=a$ and $F(d)=b$, since a symmetric argument can be used in the other case.
Define $F_\zeta:[c,d] \to \R$ by
\[
	 F_\zeta(t) = \min \{ F(s) : s \in [t,d] \}
\]
Define
\[
	U = \bigg\{ t \in [c,d] : F_\zeta(t) \neq F(t) \bigg\}
\]
By the Riesz sunrise lemma, $F_\zeta$ is constant over each connected component of $U$.
Let
\[
	J = [c,d] \setminus U
\]
Define a Lipschitz function $\zeta:[c,d] \to \R^2_+$ by $\zeta=(F_\zeta, 0)$.
Note that $\lambda_1(\zeta)$ is finite, by Lemma \ref{arcparam}.
Let $\epsilon>0$.
There is a function $w$ in $\Lip_0(c,d)$ such that
\[
	\frac{ \int_c^d \frac{| w' |^2 F_\zeta}{|\zeta'|} \,dt}{\int_c^d | w |^2 F_\zeta |\zeta'| \,dt} < \lambda_1(\zeta) +\epsilon
\]
Note that $| \zeta ' |$ is zero over $U$. 
Therefore $w$ is constant over $U$.
The isolated points of $J$ are countable, so at almost every point in $J$, the curve $\zeta$ is differentiable with $\zeta'=\alpha'$.
Then
\[
	\frac{ \int_c^d \frac{| w' |^2 F}{|\alpha'|}  \,dt}{\int_c^d | w |^2 F |\alpha'| \,dt} \le \frac{ \int_c^d \frac{| w' |^2 F_\zeta}{|\zeta'|}  \,dt}{\int_c^d | w |^2 F_\zeta |\zeta'| \,dt}
\]
Therefore $\lambda_1(\alpha) \le \lambda_1(\zeta)$.
Define $\beta: [0,1] \to \R^2_+$ by
\[
	\beta(t) = \Big( a(1-t) + bt , 0 \Big)
\]
Then $\lambda_1(\zeta) \le \lambda_1(\beta)$ by Lemma \ref{arcparam}.
Note that $\lambda_1(\beta) = \lambda_1(A_{a,b})$.
\end{proof}

In the following lemma, we bound the length of a curve $\gamma$ in $\cR$ in terms of $\lambda_1(\gamma)$.

\begin{Lemma}
\label{Lbound}
Let $\gamma$ be a curve in $\cR$.
Let $L$ be the length of $\gamma$.
Write $\gamma=(F,G)$.
If the image of $F$ is $[a,b]$, then
\[
	L \le \pi \sqrt{ \frac{b}{a \lambda_1(\gamma)} }
\]
\end{Lemma}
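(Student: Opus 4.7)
The plan is to reparametrize by constant speed, so that the length $L$ factors out of the Rayleigh quotient, after which the problem reduces to a one-dimensional weighted Dirichlet eigenvalue bound on $[0,1]$.

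First I apply Lemma \ref{arcparam} to obtain $\beta:[0,1] \to \R^2_+$ with $|\beta'(t)| = L$ for almost every $t$ and $\lambda_1(\beta) \ge \lambda_1(\gamma)$. Writing $\beta = (F_\beta, G_\beta)$, the function $F_\beta$ still has image $[a,b]$, since $\beta$ is a reparametrization of $\gamma$. Hence it suffices to prove the bound with $\lambda_1(\beta)$ in place of $\lambda_1(\gamma)$.

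Since $|\beta'|$ equals the constant $L$, the Rayleigh quotient for $\beta$ factors as
\[
	\frac{\int_0^1 \frac{|w'|^2 F_\beta}{|\beta'|} \, dt}{\int_0^1 |w|^2 F_\beta |\beta'| \, dt} = \frac{1}{L^2} \cdot \frac{\int_0^1 |w'|^2 F_\beta \, dt}{\int_0^1 |w|^2 F_\beta \, dt}
\]
for any $w \in \Lip_0(0,1)$. To produce an upper bound on $\lambda_1(\beta)$, I test against $w(t) = \sin(\pi t)$, which lies in $\Lip_0(0,1)$. Using $F_\beta \le b$ in the numerator and $F_\beta \ge a$ in the denominator, together with the identities $\int_0^1 \pi^2\cos^2(\pi t) \, dt = \pi^2/2$ and $\int_0^1 \sin^2(\pi t) \, dt = 1/2$, I obtain $\lambda_1(\beta) \le \pi^2 b/(aL^2)$. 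Rearranging and invoking $\lambda_1(\gamma) \le \lambda_1(\beta)$ yields the claim.

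There is no serious obstacle here: the constant-speed reparametrization is precisely the device needed to decouple length from the shape of the curve, after which the estimate is a pointwise comparison of the weight $F$ against its extrema combined with the standard Dirichlet Rayleigh quotient on $[0,1]$.
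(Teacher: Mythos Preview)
Your proof is correct and follows essentially the same approach as the paper: reparametrize to constant speed via Lemma~\ref{arcparam}, test the Rayleigh quotient against $w(t)=\sin(\pi t)$, and bound $F_\beta$ above by $b$ in the numerator and below by $a$ in the denominator to obtain $\lambda_1(\gamma)\le\lambda_1(\beta)\le \pi^2 b/(aL^2)$. The only cosmetic difference is that you explicitly factor out $1/L^2$ before estimating, whereas the paper carries the $L$ through the two integrals separately.
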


\begin{proof}
Let $\beta$ be the constant-speed reparametrization given by Lemma \ref{arcparam}.
Then $\lambda_1(\beta) \ge \lambda_1(\gamma)$.
Define $w: [0,1] \to \R$ by
\[
	w(t) = \sin(\pi t)
\]
Write $\beta=(F_\beta,G_\beta)$.
Then
\[
	\int_0^1 \frac{| w'(t) |^2 F_\beta(t)}{| \beta'(t)|} \,dt \le \frac{\pi^2 b}{L} \int_0^1 \cos^2(\pi t) \,dt
\]
Also,
\[
	\int_0^1 | w(t) |^2 F_\beta(t) | \beta'(t)| \,dt \ge La \int_0^1 \sin^2(\pi t) \,dt
\]
Now
\[
	\lambda_1(\gamma) \le \lambda_1(\beta) \le \frac{\pi^2b}{L^2a}
\]
\end{proof}

In the following lemma, we prove that there is a curve $\alpha$ in $\cR$ such that $\lambda_1(\alpha)=\Lambda$.
We take a maximizing sequence of curves in $\cR$.
By Lemma \ref{arcparam}, we may assume the curves in the sequence are parametrized by arc length.
Then Lemma \ref{Fbound}, Lemma \ref{Lbound} and the Arzela-Ascoli theorem imply that a subsequence converges uniformly to a curve in $\cR$.
By upper semi-continuity, the limit curve is maximal.
We remark that Cheeger and Colding \cite{CC} proved upper semi-continuity of the eigenvalue functional in a general setting.
In our situation, a simple argument applies.

Recall $R$ is the maximum of the radii of $P$ and $Q$.
Also $D_R$ is a disc in $\R^2$ of radius $R$ and $\lambda_1(D_R)$ is the first Dirichlet eigenvalue of $D_R$.

\begin{Lemma}
\label{Lipexist}
Assume that $\Lambda > \lambda_1(D_R)$.
Then there is a curve $\alpha$ in $\cR$ such that
\[
	\lambda_1(\alpha) = \Lambda
\]
and, for almost every $t$ in $[0,1]$,
\[
	| \alpha'(t) | = L
\]
where $L$ is the length of $\alpha$.
\end{Lemma}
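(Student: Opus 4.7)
The plan is the direct method of the calculus of variations: construct a maximizing sequence, use the prior lemmas and the hypothesis $\Lambda > \lambda_1(D_R)$ to force precompactness, extract a uniform limit via Arzela-Ascoli, verify upper semi-continuity of $\lambda_1$ at the limit, and finally apply Lemma \ref{arcparam} to produce the constant-speed representative.

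\textbf{Compactness.} Choose $\gamma_n \in \cR$ with $\lambda_1(\gamma_n) \to \Lambda$; after applying Lemma \ref{arcparam}, we may assume each $\gamma_n$ has constant speed $L_n$ on $[0,1]$, and $\lambda_1(\gamma_n) \to \Lambda$ is preserved since the reparametrization can only increase the eigenvalue while $\Lambda$ is the supremum. Writing $\gamma_n = (F_n, G_n)$, Lemma \ref{Fbound} gives $\lambda_1(\gamma_n) \le \lambda_1(A_{a_n, b_n})$, where $[a_n, b_n]$ is the range of $F_n$. Because $\lambda_1(A_{a, b}) \to 0$ as $b \to \infty$ (with $a$ bounded) and $\lambda_1(A_{a, b}) \to \lambda_1(D_b) \le \lambda_1(D_R)$ as $a \to 0$ (with $b \ge R$), the hypothesis $\Lambda > \lambda_1(D_R)$ forces $\delta \le F_n \le M$ eventually for some positive constants $\delta, M$. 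Lemma \ref{Lbound} then bounds $L_n \le L_{\max}$ uniformly, and constant speed bounds $|G_n(t) - G_n(0)|$ by $L_{\max}$. The family is uniformly Lipschitz and uniformly bounded in $\R^2$, so Arzela-Ascoli extracts a subsequence converging uniformly to a Lipschitz curve $\alpha$, with $L_n \to L^* \ge |p - q| > 0$ and $|\alpha'| \le L^*$ almost everywhere. The lower bound $F^* \ge \delta$ places $\alpha$ in $\cR$.

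\textbf{Upper semi-continuity.} Fix any $w \in \Lip_0(0,1)$. With the convention that the quotient is $+\infty$ wherever $|\alpha'| = 0$ and $w' \ne 0$, the pointwise inequality $|\alpha'| \le L^*$ together with uniform convergence $F_n \to F^*$ and $L_n \to L^*$ yield
\[
\frac{\int_0^1 |w'|^2 F^*/|\alpha'|\,dt}{\int_0^1 w^2 F^* |\alpha'|\,dt} \;\ge\; \frac{1}{(L^*)^2} \cdot \frac{\int_0^1 |w'|^2 F^*\,dt}{\int_0^1 w^2 F^*\,dt} = \lim_{n \to \infty} \frac{\int_0^1 |w'|^2 F_n/|\gamma_n'|\,dt}{\int_0^1 w^2 F_n |\gamma_n'|\,dt} \;\ge\; \lim_{n \to \infty} \lambda_1(\gamma_n) = \Lambda.
\]
Taking the infimum over $w$ gives $\lambda_1(\alpha) \ge \Lambda$; the reverse inequality is immediate since $\alpha \in \cR$, so $\lambda_1(\alpha) = \Lambda$. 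A final application of Lemma \ref{arcparam} to $\alpha$ produces a constant-speed reparametrization, still with eigenvalue $\Lambda$ (sandwiched between $\lambda_1(\alpha)$ and $\Lambda$), whose modulus of velocity equals its length almost everywhere.

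\textbf{Main obstacle.} The delicate point is the uniform lower bound $F_n \ge \delta$; this is where the hypothesis $\Lambda > \lambda_1(D_R)$ enters in an essential way, via the convergence $\lambda_1(A_{a, b}) \to \lambda_1(D_b)$ as $a \to 0$ together with the monotonicity $\lambda_1(D_b) \le \lambda_1(D_R)$ for $b \ge R$. Once this confinement to a compact subset of $\R^2_+$ is secured, the uniform Lipschitz bound from Lemma \ref{Lbound}, Arzela-Ascoli, and the one-line semi-continuity estimate above complete the proof without further difficulty.
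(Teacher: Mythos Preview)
Your proof is correct and follows essentially the same route as the paper: a maximizing sequence is reparametrized to constant speed via Lemma~\ref{arcparam}, confined to a compact subset of $\R^2_+$ using Lemma~\ref{Fbound} and the hypothesis $\Lambda>\lambda_1(D_R)$, bounded in length via Lemma~\ref{Lbound}, and passed to a uniform limit by Arzel\`a--Ascoli, after which the same $|\alpha'|\le L^*$ comparison yields upper semi-continuity. The only cosmetic difference is that the paper phrases the semi-continuity step by choosing a near-optimal test function for $\alpha$ and showing $\Lambda<\lambda_1(\alpha)+\epsilon$, whereas you show directly that every Rayleigh quotient for $\alpha$ dominates $\Lambda$; these are equivalent formulations of the same estimate.
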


\begin{proof}
Let $\{ \gamma_k \}$ be a sequence in $\cR$ such that
\[
	\lim_{k \to \infty} \lambda_1(\gamma_k) = \Lambda
\]
Let $L_k$ be the length of $\gamma_k$.
Using Lemma \ref{arcparam}, we may assume that, for every $k$ and almost every $t$ in $[0,1]$,
\[
	| \gamma_k'(t) | = L_k
\]
Note that
\[
	\lambda_1(D_R) = \inf \Big\{ \lambda_1(A_{a,R}) : 0<a<R \Big\}
\]
For a proof, we refer to Rauch and Taylor \cite{RT}, who considered a much more general problem.
Now, since $\Lambda > \lambda_1(D)$, there are positive numbers $a<R$ and $b>R$ such that
\[
	\lambda_1(A_{a,R}) < \Lambda
\]
and
\[
	\lambda_1(A_{R,b}) < \Lambda
\]
We may assume that $\lambda_1(\alpha_k) > \lambda_1(A_{a,R})$ and $\lambda_1(\alpha_k) > \lambda_1(A_{R,b})$ for all $k$.
Write $\gamma_k=(F_k,G_k)$.
By Lemma \ref{Fbound}, the image of $F_k$ is contained in $[a,b]$ for all $k$.
Then, by Lemma \ref{Lbound}, the curves $\gamma_k$ have uniformly bounded length.
In particular, there is a compact subset of $\R^2_+$ which contains the image of $\gamma_k$ for all $k$.
By passing to a subsequence, we may assume that the sequence of lengths $\{ L_k \}$ converges to some positive number $\ell$.
Then the curves $\gamma_k$ are uniformly Lipschitz.
By applying the Arzela-Ascoli theorem and passing to a subsequence, we may assume that $\{ \gamma_k \}$ converges uniformly to a curve $\alpha$ in $\cR$.
Moreover, for almost every $t$ in $[0,1]$,
\[
	| \alpha'(t) | \le \ell
\]
Write $\alpha=(F,G)$.
Let $\epsilon >0$.
There is a $w$ in $\Lip_0(0,1)$ such that
\[
	\frac{ \int_0^1 \frac{| w' |^2 F}{|\alpha'|} \,dt}{\int_0^1 | w |^2 F |\alpha'| \,dt} < \lambda_1(\alpha) + \epsilon
\]
Now
\[
	\lim_{k \to \infty} \frac{ \int_0^1 \frac{| w' |^2 F_k}{L_k} \,dt}{\int_0^1 | w |^2 F_k L_k \,dt} = \frac{ \int_0^1 \frac{| w' |^2 F}{\ell} \,dt}{\int_0^1 | w |^2 F \ell \,dt} \le \frac{ \int_0^1 \frac{| w' |^2 F}{|\alpha'|} \,dt}{\int_0^1 | w |^2 F |\alpha'| \,dt}
\]
This implies that
\[
	\Lambda < \lambda_1(\alpha) + \epsilon
\]
Since $\epsilon$ was arbitrary, we obtain $\lambda_1(\alpha) = \Lambda$.
To complete the proof, we apply Lemma \ref{arcparam}.
\end{proof}

In the next section we consider regularity of a maximizing curve.
We conclude this section by applying Lemma \ref{Fbound} to prove Lemma \ref{topodisc}.

\begin{proof}[Proof of Lemma 1.1]
Without loss of generality, assume $P$ has radius $R$.
Let $\Sigma_P$ be the connected component of $\Sigma$ which contains $P$.
Then $\Sigma_P$ is diffeomorphic to a disc.
Let $L$ be the length of the curve $\Sigma_P \cap \R^2_+$.
Let $\gamma:[0,L) \to \Sigma_P \cap \R^2_+$ be a smooth arclength parametrization with $\gamma(0)$ lying on $P$.
Write $\gamma=(F,G)$.
Then
\[
	\lim_{t \to L^-} F(t) = 0
\]
For $d$ in $(0,L)$, define
\[
	\gamma_d = \gamma \Big|_{[0,d]}
\]
Write $\gamma=(F,G)$.
For every $d$ in $(0,L)$, by Lemma \ref{Fbound},
\[
	\lambda_1(\gamma_d) \le \lambda_1(A_{F(d),R})
\]
We remark that
\[
	\lim_{a \to 0^+} \lambda_1(A_{a,R}) = \lambda_1(D_R)
\]
For a proof, we refer again to Rauch and Taylor, \cite{RT}.
Also, for every $d$ in $(0,L)$,
\[
	\lambda_1(\Sigma) \le \lambda_1(\gamma_d)
\]
Therefore $\lambda_1(\Sigma) \le \lambda_1(D_R)$.
\end{proof}

\section{Regularity}

In this section we complete the proof of Lemma \ref{exreg}.
\begin{Lemma}
\label{efex}
Let $\beta$ be a curve in $\cR$.
Assume that there is a constant $c>0$ such that, for almost every $t$ in $[0,1]$,
\[
	| \beta'(t) | \ge c
\]
Write $\beta=(F,G)$.
Then there exists a function $\phi$ in $\Lip_0(0,1)$ which does not vanish in $(0,1)$ and satisfies
\[
	\lambda_1(\beta) = \frac{ \int_0^1 \frac{| \phi' |^2 F}{| \beta' |} \,dt}{\int_0^1 | \phi |^2 | \beta' | F \,dt}
\]
The function $\phi$ is a weak solution of the equation
\[
	-\bigg( \frac{F}{|\beta'|} \phi' \bigg)' = \lambda_1(\beta) F | \beta' | \phi
\]
Furthermore,
\[
	\inf \bigg\{  \frac{ \int_0^1 \frac{| w' |^2 F}{| \beta' |} \,dt}{\int_0^1 | w |^2 | \beta' | F \,dt} : w \in \Lip_0(0,1), \int_0^1 w \phi | \beta' | F \,dt =0 \bigg\} > \lambda_1(\beta)
\]
\end{Lemma}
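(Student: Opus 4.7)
Because $\beta$ is Lipschitz with $|\beta'| \ge c > 0$ almost everywhere, and its first coordinate $F$ is continuous and strictly positive on $[0,1]$ (the image $\beta([0,1])$ being compact in $\R^2_+$), the weights $F/|\beta'|$ and $F|\beta'|$ are bounded above and below by positive constants, so the Rayleigh quotient is comparable to the standard one on $H^1_0(0,1)$. I plan to produce the minimizer $\phi$ by the direct method: take a normalized minimizing sequence $w_k \in \Lip_0(0,1)$, observe the uniform $H^1$-bound that follows from the normalization together with the Rayleigh quotient bound, and extract a subsequence converging weakly in $H^1_0$ and strongly in $L^2$ to some $\phi \in H^1_0(0,1)$. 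Lower semicontinuity of the numerator, continuity of the denominator under strong $L^2$-convergence, and density of $\Lip_0$ in $H^1_0$ then show that $\phi$ achieves $\lambda_1(\beta)$. Reading off the weak Euler-Lagrange equation from the first variation makes $F\phi'/|\beta'|$ absolutely continuous with essentially bounded derivative; dividing by the positive bounded weight $F/|\beta'|$ yields $\phi' \in L^\infty$, and hence $\phi \in \Lip_0(0,1)$.

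Next I would establish non-vanishing. Replacing $\phi$ by $|\phi|$ (which has the same quotient) lets me assume $\phi \ge 0$. Suppose for contradiction that $\phi(t_0) = 0$ for some $t_0 \in (0,1)$, and write $\phi = \phi_1 + \phi_2$ with $\phi_i$ the restriction of $\phi$ to one side of $t_0$, extended by zero; both lie in $\Lip_0(0,1)$. Since the quotient on $\phi$ is a weighted average of the quotients on $\phi_1$ and $\phi_2$ and each sub-quotient is at least $\lambda_1(\beta)$, each must equal $\lambda_1(\beta)$, so whichever $\phi_i$ is nontrivial is itself a minimizer. It then satisfies the weak equation on all of $(0,1)$, yet vanishes identically on one side of $t_0$, forcing $F\phi_i'/|\beta'|$ to equal zero at $t_0$ by continuity. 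The resulting first-order linear system
\[
  u_1' = (|\beta'|/F)\,u_2, \qquad u_2' = -\lambda_1(\beta)\, F|\beta'|\,u_1,
\]
in $u_1 = \phi_i$ and $u_2 = F\phi_i'/|\beta'|$, has bounded measurable coefficients and zero Cauchy data at $t_0$, so Gronwall uniqueness gives $\phi_i \equiv 0$, contradicting normalization.

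The non-vanishing immediately yields simplicity of $\lambda_1(\beta)$: if $\phi_1,\phi_2 > 0$ are two positive minimizers, the minimizer $\phi_1 - [\phi_1(t_0)/\phi_2(t_0)]\phi_2$ vanishes at $t_0$ and hence is identically zero. For the strict-inequality clause I would run the direct method once more on the constrained problem; the orthogonality condition and the normalization both pass to the strong $L^2$-limit, producing a minimizer $\psi$ with $\int_0^1 \psi \phi F|\beta'|\,dt = 0$ and $\int_0^1 \psi^2 F|\beta'|\,dt = 1$. If the constrained infimum equalled $\lambda_1(\beta)$, then $\psi$ would be an unconstrained minimizer, hence a scalar multiple of $\phi$ by simplicity, and orthogonality together with $\int_0^1 \phi^2 F|\beta'|\,dt > 0$ would force $\psi \equiv 0$, contradicting the normalization. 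The main obstacle is the non-vanishing argument, where the merely measurable coefficient $|\beta'|$ has to be accommodated; the pointwise lower bound $|\beta'|\ge c$ is precisely what makes the Gronwall uniqueness step go through.
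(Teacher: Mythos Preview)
Your proposal is correct and is precisely the standard Sturm--Liouville/variational argument the paper has in mind: the paper does not give its own proof of this lemma but simply writes ``This follows from a standard argument. We omit the proof and refer to Gilbarg and Trudinger [GT] for details.'' Your direct-method construction of the minimizer, bootstrap to $\Lip_0$ via the weak equation, non-vanishing through Carath\'eodory uniqueness with $L^\infty$ coefficients (where the hypothesis $|\beta'|\ge c$ is indeed exactly what is needed), and simplicity/spectral-gap arguments are the expected ingredients and fill in what the paper leaves to the reference.
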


This follows from a standard argument.
We omit the proof and refer to Gilbarg and Trudinger \cite{GT} for details.
Next we see that if a maximizing curve in $\cR$ with constant speed intersects a sufficiently small circle at two points, then it must stay inside the circle between those points.

\begin{Lemma}
\label{circles}
Let $\alpha$ be a curve in $\cR$ such that
\[
	\lambda_1(\alpha) = \Lambda
\]
Let $L$ be the length of $\alpha$.
Assume that, for almost every $t$ in $[0,1]$,
\[
	| \alpha'(t) | = L
\]
Let $(x_0,y_0)$ be in $\R^2_+$.
Let $r_0$ be a positive number such that $5r_0 \le x_0$.
Define
\[
	D = \bigg\{ (x,y) \in \R^2_+: (x-x_0)^2+(y-y_0)^2 \le r_0^2 \bigg\}
\]
Let $0 \le t_1 < t_2 \le 1$ and assume $\alpha(t_1)$ and $\alpha(t_2)$ lie on the boundary $\del D$.
Assume that, for all $t$ in $[t_1, t_2]$,
\[
	| \alpha(t) - (x_0,y_0) | < 2 r_0
\]
Then $\alpha(t)$ is in $D$ for all $t$ in $[t_1, t_2]$. 
\end{Lemma}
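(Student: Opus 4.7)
The plan is to argue by contradiction via reflection through the circle $\partial D$, implementing the geometric idea stated in the introduction. Suppose there is some $t^{*} \in (t_1, t_2)$ with $\alpha(t^{*}) \notin D$, and let $J \subset (t_1, t_2)$ be the open set of times with $|\alpha(t) - (x_0, y_0)| > r_0$. Let $\sigma$ denote inversion through $\partial D$, and define a competitor $\tilde\alpha \colon [0,1] \to \R^2$ by setting $\tilde\alpha = \sigma \circ \alpha$ on $J$ and $\tilde\alpha = \alpha$ elsewhere. Because $\sigma$ fixes $\partial D$ and $\alpha(t_1), \alpha(t_2) \in \partial D$, the curve $\tilde\alpha$ is continuous. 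The conformal factor $r_0^2 / r^2$ of $\sigma$ (where $r = |\alpha - (x_0, y_0)|$) gives $|\tilde\alpha'| = (r_0^2/r^2) L$ on $J$ and $|\tilde\alpha'| = L$ off $J$; since $r_0 < r < 2r_0$ on $J$, one gets $L/4 < |\tilde\alpha'| \le L$ throughout. A quick estimate using $5r_0 \le x_0$ also keeps $\tilde\alpha$ in $\R^2_+$, so $\tilde\alpha \in \cR$.

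The core of the argument is a pointwise comparison on $J$ between the two integrands in the Rayleigh quotient for $\lambda_1$. Writing $\tilde\alpha = (\tilde F, \tilde G)$, one has $\tilde F = x_0 + r_0^2(F - x_0)/r^2$, and direct algebra gives
\[
	\frac{\tilde F/|\tilde\alpha'|}{F/|\alpha'|} - 1 = \frac{x_0(r^2 - r_0^2)}{r_0^2 F}, \qquad 1 - \frac{\tilde F \, |\tilde\alpha'|}{F \, |\alpha'|} = \frac{(r^2 - r_0^2)\bigl(F(r^2 + r_0^2) - r_0^2 x_0\bigr)}{F \, r^4}.
\]
The first expression is non-negative because $r \ge r_0$ on $J$. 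The second is non-negative if and only if $F(r^2 + r_0^2) \ge r_0^2 x_0$; this I would verify directly from the hypotheses: on $J$, the inequalities $r < 2r_0$ and $5r_0 \le x_0$ force $F \ge x_0 - r > 3x_0/5 > x_0/2$, and combined with $r^2 + r_0^2 \ge 2r_0^2$ this yields the required bound. Both inequalities are strict wherever $r > r_0$, which is a.e.\ on $J$.

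These two pointwise estimates imply that, for every $w \in \Lip_0(0,1)$ which is not identically zero on $J$,
\[
	\frac{\int_0^1 |w'|^2 \tilde F / |\tilde\alpha'| \, dt}{\int_0^1 |w|^2 \tilde F |\tilde\alpha'| \, dt} > \frac{\int_0^1 |w'|^2 F / |\alpha'| \, dt}{\int_0^1 |w|^2 F |\alpha'| \, dt} \ge \lambda_1(\alpha).
\]
Now I would apply Lemma \ref{efex} to $\tilde\alpha$, using $|\tilde\alpha'| \ge L/4$, to obtain an eigenfunction $\tilde\phi$ for $\tilde\alpha$ which does not vanish in $(0,1)$; substituting $w = \tilde\phi$ in the displayed inequality gives $\lambda_1(\tilde\alpha) > \lambda_1(\alpha) = \Lambda$, contradicting $\tilde\alpha \in \cR$.

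I expect the main technical obstacle to be verifying the denominator inequality $\tilde F |\tilde\alpha'| \le F |\alpha'|$. Unlike the numerator inequality, which follows immediately from $r \ge r_0$, it requires a genuine lower bound on $F$; the hypotheses $5r_0 \le x_0$ and $|\alpha - (x_0, y_0)| < 2r_0$ throughout $[t_1, t_2]$ are calibrated precisely to force $F > x_0/2$ on $J$, which is exactly what makes the reflected curve beat $\alpha$.
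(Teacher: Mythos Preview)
Your proposal is correct and follows essentially the same approach as the paper: reflect through $\partial D$ to build a competitor, establish the two pointwise inequalities $\tilde F/|\tilde\alpha'| > F/|\alpha'|$ and $\tilde F|\tilde\alpha'| < F|\alpha'|$ on the reflected portion, and then feed the first eigenfunction of the competitor (obtained via Lemma~\ref{efex}) into the Rayleigh quotient of $\alpha$ to derive a contradiction. The paper does exactly this, only more tersely---it reduces to a single maximal excursion interval rather than your full set $J$, and it asserts the two pointwise inequalities without writing out the algebra or isolating the role of the hypothesis $5r_0\le x_0$; your explicit verification that $F>x_0/2$ on $J$ is the content the paper suppresses.
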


\begin{proof}
Suppose not.
It suffices to consider the case where $\alpha(t)$ lies outside of $D$ for every $t$ in $(t_1,t_2)$.
There are Lipschitz functions $r:[0,1] \to (0, \infty)$ and $\theta:[0,1] \to \R$ such that
\[
	\alpha(t) = \Big( x_0 + r(t) \cos \theta(t), y_0 + r(t) \sin \theta(t) \Big)
\]
Note that $r_0 < r(t) < 2r_0$ for all $t$ in $(t_1, t_2)$.
Define a curve $\beta$ in $\cR$ by
\[
	\beta(t) =
	\begin{cases}
		\alpha(t) & t \in [0,t_1] \cup [t_2,1] \\
		\Big( x_0 + \frac{r_0^2}{r(t)} \cos \theta(t), y_0 + \frac{r_0^2}{r(t)} \sin \theta(t) \Big) & t \in [t_1, t_2] \\
	\end{cases}
\]
Write $\beta=(F_\beta, G_\beta)$.
Then, over $(t_1,t_2)$,
\[
	\frac{F_\beta}{| \beta' |} > \frac{F_\alpha}{| \alpha'|}
\]
Also, over $(t_1,t_2)$,
\[
	F_\beta | \beta'| < F_\alpha | \alpha'|
\]
By Lemma \ref{efex}, there is a function $\phi$ in $\Lip_0(0,1)$ which is non-vanishing over $(0,1)$ and satisfies
\[
	\lambda_1(\beta) = \frac{ \int_0^1 \frac{| \phi' |^2 F_\beta}{| \beta' |} \,dt}{\int_0^1 | \phi |^2 | \beta' | F_\beta \,dt}
\]
Write $\alpha=(F_\alpha, G_\alpha)$.
Then
\[
	\lambda_1(\alpha) \le \frac{ \int_0^1 \frac{| \phi' |^2 F_\alpha}{| \alpha' |} \,dt}{\int_0^1 | \phi |^2 | \alpha' | F_\alpha \,dt} < \frac{ \int_0^1 \frac{| \phi' |^2 F_\beta}{| \beta' |} \,dt}{\int_0^1 | \phi |^2 | \beta' | F_\beta \,dt} = \lambda_1(\beta)
\]
This is a contradiction, because $\lambda_1(\alpha)=\Lambda$.
\end{proof}

\begin{Lemma}
\label{inject}
There is a curve $\alpha$ in $\cR$ which is injective and satisfies
\[
	\lambda_1(\alpha) = \Lambda
\]
Moreover if $L$ is the length of $\alpha$ then for almost every $t$ in $[0,1]$,
\[
	| \alpha'(t) | = L
\]
\end{Lemma}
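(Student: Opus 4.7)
The plan is to find a maximizing curve of minimum length and then rule out self-intersections by a loop-cutting argument that would otherwise produce a strictly shorter maximizer. Set
\[
 L^* = \inf\bigl\{ \mathrm{length}(\gamma) : \gamma \in \cR,\ \lambda_1(\gamma) = \Lambda \bigr\},
\]
which is finite since Lemma \ref{Lipexist} supplies some maximizer. Take a sequence $\{\gamma_k\}\subset\cR$ of maximizers with $L_k := \mathrm{length}(\gamma_k) \searrow L^*$, reparametrized via Lemma \ref{arcparam} so that $|\gamma_k'| \equiv L_k$. The argument of Lemma \ref{Lipexist} applies verbatim: Lemmas \ref{Fbound} and \ref{Lbound} confine the $\gamma_k$ to a fixed compact subset of $\R^2_+$; uniform Lipschitz bounds combined with Arzela-Ascoli give a uniformly convergent subsequence; and the upper semi-continuity step yields $\lambda_1(\alpha) = \Lambda$ for the limit $\alpha \in \cR$. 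Since $\alpha$ has Lipschitz constant at most $L^*$ (so length at most $L^*$) and is itself a maximizer (so length at least $L^*$), one gets $\mathrm{length}(\alpha) = L^*$. A last application of Lemma \ref{arcparam} makes $|\alpha'| = L^*$ almost everywhere while preserving $\lambda_1(\alpha) = \Lambda$.

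Next, suppose for contradiction that $\alpha(t_1) = \alpha(t_2)$ for some $0 \le t_1 < t_2 \le 1$ with $(t_1,t_2) \neq (0,1)$. Define $\tilde\alpha \in \cR$ by excising $\alpha|_{[t_1,t_2]}$, concatenating at the common point $\alpha(t_1) = \alpha(t_2)$, and reparametrizing linearly to $[0,1]$; its length is $(1-(t_2-t_1))L^* < L^*$. The key step is the inequality $\lambda_1(\alpha) \le \lambda_1(\tilde\alpha)$, which I would verify by a direct test-function lift: given any $\tilde w \in \Lip_0(0,1)$ near-minimizing for $\tilde\alpha$, extend it to $w \in \Lip_0(0,1)$ on the parameter interval of $\alpha$ by setting $w$ equal to the constant value $\tilde w(\tau)$ of $\tilde w$ at the gluing point throughout the excised subinterval. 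Using the reparametrization invariance of the Rayleigh quotient, the numerators for $\alpha$ with $w$ and for $\tilde\alpha$ with $\tilde w$ agree because $w' \equiv 0$ on the excised subinterval, while the denominator for $\alpha$ exceeds that for $\tilde\alpha$ by the nonnegative quantity $\tilde w(\tau)^2 \int_{t_1}^{t_2} F |\alpha'|\,dt$. Hence the Rayleigh quotient for $\alpha$ with $w$ is no larger than the one for $\tilde\alpha$ with $\tilde w$, and infimizing over $\tilde w$ gives $\lambda_1(\alpha) \le \lambda_1(\tilde\alpha)$. Thus $\tilde\alpha$ is also a maximizer, but of strictly smaller length, contradicting the definition of $L^*$.

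The main obstacle is the first step, namely producing a limit curve that is simultaneously maximizing and length-minimizing. This is possible because length is lower semi-continuous and $\lambda_1$ is upper semi-continuous under uniform convergence of uniformly Lipschitz curves, as already exploited in the proof of Lemma \ref{Lipexist}. By contrast, the loop-cutting comparison is a short and essentially elementary Rayleigh quotient calculation once the length-minimizing maximizer is in hand. The only genuine corner case is $(t_1,t_2) = (0,1)$, which can arise only when $p = q$; in that situation the only self-intersection sits at the endpoints, which is compatible with the appropriate notion of injectivity for a closed loop in $\cR$.
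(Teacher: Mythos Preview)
Your argument is correct, but it differs from the paper's in a meaningful way. The paper never selects a length-minimizing maximizer. Instead, starting from any maximizer $\alpha_0$ provided by Lemma~\ref{Lipexist}, it first trims to $[c,d]$ with $c=\max\{t:\alpha_0(t)=p\}$ and $d=\min\{t:\alpha_0(t)=q\}$, so that any remaining self-intersection occurs at interior parameters $s<u$. It then excises the loop to form $\beta$ and invokes Lemma~\ref{efex} to obtain an eigenfunction $\phi$ for $\beta$ that is \emph{non-vanishing} on $(0,1)$. Pulling $\phi$ back to $\alpha$ (constant on $[s,u]$) gives a test function whose Rayleigh quotient on $\alpha$ is \emph{strictly} smaller than $\lambda_1(\beta)$, because the extra denominator contribution $\phi(1/2)^2\int_s^u F_\alpha|\alpha'|\,dt$ is strictly positive. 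Thus $\lambda_1(\alpha)<\lambda_1(\beta)$, contradicting maximality directly.

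Your route trades that strict inequality for a second compactness step: you redo the Arzel\`a--Ascoli/upper semi-continuity argument of Lemma~\ref{Lipexist} along a sequence of maximizers with lengths decreasing to $L^*$, and then the loop-cutting only needs $\lambda_1(\alpha)\le\lambda_1(\tilde\alpha)$, with the contradiction coming from $\mathrm{length}(\tilde\alpha)<L^*$. This is sound (your lift of a near-minimizing $\tilde w$ to $w$ is exactly the right computation, and the boundary cases $t_1=0$ or $t_2=1$ are handled since then $\tilde w(\tau)=0$). The advantage of your approach is that it avoids appealing to Lemma~\ref{efex} and the non-vanishing of the ground state; the price is the extra compactness argument to produce a shortest maximizer. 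The paper's approach is shorter because it leverages Lemma~\ref{efex}, which is already needed elsewhere. Note also that since $P$ and $Q$ are distinct circles, $p\neq q$ and the corner case $(t_1,t_2)=(0,1)$ you mention does not arise.
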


\begin{proof}
Let $\alpha_0$ be the curve in $\cR$ given by Lemma \ref{Lipexist}.
Define
\[
	c = \max \bigg\{ t \in [0,1] : \alpha_0(t) = p \bigg\}
\]
and
\[
	d = \min \bigg\{ t \in [0,1] : \alpha_0(t) = q \bigg\}
\]
Then define a curve $\alpha$ in $\cR$ by
\[
	\alpha(t) = \alpha_0\Big( c + t (d-c) \Big)
\]
Changing variables shows that $\lambda_1(\alpha) \ge \lambda_1(\alpha_0)$.
Therefore,
\[
	\lambda_1(\alpha) = \Lambda
\]
If $L$ is the length of $\alpha$, then for almost every $t$ in $[0,1]$,
\[
	| \alpha'(t) | = L
\]

Suppose that $\alpha$ is not injective.
Then there are points $s$ and $u$ in $(0,1)$ such that $s<u$ and $\alpha(s)=\alpha(u)$.
Define a function $\eta:[0,1] \to [0,1]$ by
\[
	\eta(t) =
	\begin{cases}
		2st & 0 \le t \le 1/2 \\
		(2-2u)t + (2u-1) & 1/2 < t \le 1 \\
	\end{cases}
\]
The function $\eta$ is not continuous, but $\beta=\alpha \circ \eta$ is a curve in $\cR$.
Write $\beta=(F_\beta, G_\beta)$.
By Lemma \ref{efex}, there is function $\phi$ in $\Lip_0(0,1)$ which is non-vanishing over $(0,1)$ and satisfies
\[
	\lambda_1(\beta) = \frac{\int_0^1 \frac{ |\phi'|^2 F_\beta }{ | \beta' | } \,dt}{ \int_0^1 |\phi|^2 F_\beta |\beta'| \,dt}
\]
Define a function $\ell:[0,1] \to [0,1]$ by
\[
	\ell(t) =
	\begin{cases}
		\frac{t}{2s} & 0 \le t \le s \\
		1/2 & s \le t \le u \\
		\frac{t+1-2u}{2-2u} & u \le t \le 1 \\
	\end{cases}
\]
Note that $\beta \circ \ell(t) = \alpha(t)$ for $t$ in $[0,s] \cup [u,1]$.
Define $v = \phi \circ \ell$.
Then $v$ is in $\Lip_0(0,1)$ and $v$ is non-vanishing over $(0,1)$.
Write $\alpha=(F_\alpha, G_\alpha)$.
Changing variables yields
\[
\begin{split}
	\lambda_1(\alpha) &\le \frac{\int_0^1 \frac{ |v'|^2 F_\alpha }{ | \alpha' | } \,dt}{ \int_0^1 |v|^2 F_\alpha |\alpha'| \,dt} \\
		&< \frac{\int_0^s \frac{ |v'|^2 F_\alpha }{ | \alpha' | } \,dt + \int_u^1 \frac{ |v'|^2 F_\alpha }{ | \alpha' | } \,dt}{ \int_0^s |v|^2 F_\alpha |\alpha'| \,dt + \int_u^1 |v|^2 F_\alpha |\alpha'| \,dt} \\
		& = \frac{\int_0^1 \frac{ |\phi'|^2 F_\beta }{ | \beta' | } \,dt}{ \int_0^1 |\phi|^2 F_\beta |\beta'| \,dt} = \lambda_1(\beta)
\end{split}
\]
This is a contradiction, because $\lambda_1(\alpha) = \Lambda$.
\end{proof}

The following lemma is a preliminary regularity result for an eigenvalue maximizing curve with constant speed.

\begin{Lemma}
\label{abdiff}
Let $\alpha$ be an injective curve in $\cR$ such that
\[
	\lambda_1(\alpha) = \Lambda
\]
Let $L$ be the length of $\alpha$.
Assume that, for almost every $t$ in $[0,1]$,
\[
	| \alpha'(t) | = L
\]
Let $t_0$ be a point in $[0,1]$.
Let $\{ s_k \}$ be a sequence in $[0, t_0]$ converging to $t_0$ and let $\{ u_k \}$ be a sequence in $[t_0,1]$ converging to $t_0$.
Then
\[
	\lim_{k \to \infty} \frac{| \alpha(u_k)-\alpha(s_k)|}{|u_k-s_k|} = L
\]
In particular,
\[
	\lim_{t \to t_0} \frac{| \alpha(t)-\alpha(t_0)|}{|t-t_0|} = L
\]
\end{Lemma}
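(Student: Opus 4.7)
The plan is to argue by contradiction. Since $|\alpha'| = L$ a.e.\ yields the automatic bound $|\alpha(u_k) - \alpha(s_k)| \le L(u_k - s_k)$, only the lower bound can fail. Suppose the liminf of these ratios is some $c < L$; passing to a subsequence, assume convergence to $c$, and write $h_k = u_k - s_k$, $d_k = |\alpha(u_k) - \alpha(s_k)|$. For each $k$, define a competitor $\beta_k \in \cR$ that agrees with $\alpha$ off $[s_k, u_k]$ and parametrizes the straight segment from $\alpha(s_k)$ to $\alpha(u_k)$ linearly on $[s_k, u_k]$. Then $|\beta_k'|$ equals $L$ off the interval and $d_k/h_k$ on it, both positive; the segment stays in $\R^2_+$ because its endpoints converge to $\alpha(t_0) \in \R^2_+$. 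Hence Lemma \ref{efex} applies to $\beta_k$ and yields a nonvanishing eigenfunction $\phi_k \in \Lip_0(0, 1)$ realizing $\lambda_1(\beta_k)$.

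The main step is the pointwise comparison on $[s_k, u_k]$:
\[
\frac{F_{\beta_k}}{|\beta_k'|} > \frac{F_\alpha}{|\alpha'|}, \qquad F_{\beta_k}|\beta_k'| < F_\alpha |\alpha'|,
\]
where $\alpha = (F_\alpha, G_\alpha)$ and $\beta_k = (F_{\beta_k}, G_{\beta_k})$. Set $x_0 = F_\alpha(t_0) > 0$. Both $F_\alpha(t)$ and $F_{\beta_k}(t)$ lie within $L h_k$ of $x_0$ on $[s_k, u_k]$, since $\alpha$ is $L$-Lipschitz and $\beta_k$ interpolates linearly. Multiplying the first inequality through by $L|\beta_k'|$ and using $|\alpha'| = L$ reduces it to
\[
F_{\beta_k}(L - |\beta_k'|) + (F_{\beta_k} - F_\alpha)|\beta_k'| > 0.
\]
For $k$ large, $F_{\beta_k} \ge x_0/2$ and $L - |\beta_k'| \ge (L - c)/2$, so the first term is bounded below by $x_0(L-c)/4 > 0$; the second is bounded in absolute value by $2L^2 h_k$ and tends to $0$. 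A symmetric identity handles the other inequality.

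Finally, use $\phi_k$ as a test function in the Rayleigh quotient for $\alpha$. Outside $[s_k, u_k]$ the integrands agree; on $[s_k, u_k]$ the pointwise inequalities give $|\phi_k'|^2 F_\alpha /|\alpha'| \le |\phi_k'|^2 F_{\beta_k}/|\beta_k'|$ and, since $\phi_k$ is nonvanishing on $(0, 1)$, the strict inequality $|\phi_k|^2 F_\alpha |\alpha'| > |\phi_k|^2 F_{\beta_k}|\beta_k'|$ holds on $(s_k, u_k)$. Therefore
\[
\lambda_1(\alpha) \le \frac{\int_0^1 \frac{|\phi_k'|^2 F_\alpha}{|\alpha'|} \,dt}{\int_0^1 |\phi_k|^2 F_\alpha |\alpha'| \,dt} < \frac{\int_0^1 \frac{|\phi_k'|^2 F_{\beta_k}}{|\beta_k'|} \,dt}{\int_0^1 |\phi_k|^2 F_{\beta_k} |\beta_k'| \,dt} = \lambda_1(\beta_k),
\]
contradicting $\lambda_1(\alpha) = \Lambda$. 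The main difficulty is the pointwise comparison, which crucially uses $\alpha(t_0) \in \R^2_+$ so that $x_0 > 0$ dominates the $O(h_k)$ perturbations coming from $F_\alpha$ and $F_{\beta_k}$ differing slightly from $x_0$.
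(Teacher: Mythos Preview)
Your proof is correct and follows essentially the same approach as the paper: argue by contradiction, replace $\alpha$ on $[s_k,u_k]$ by the straight segment to build a competitor $\beta_k$, apply Lemma~\ref{efex} to obtain a nonvanishing eigenfunction for $\beta_k$, and use it as a test function for $\alpha$ together with the pointwise inequalities $F_{\beta_k}/|\beta_k'|>F_\alpha/|\alpha'|$ and $F_{\beta_k}|\beta_k'|<F_\alpha|\alpha'|$ on $(s_k,u_k)$ to force $\lambda_1(\alpha)<\lambda_1(\beta_k)$. The paper states those pointwise inequalities without justification; your explicit verification via the identity $F_{\beta_k}L-F_\alpha|\beta_k'|=F_{\beta_k}(L-|\beta_k'|)+(F_{\beta_k}-F_\alpha)|\beta_k'|$ and the $O(h_k)$ estimates on $F_\alpha,F_{\beta_k}$ near $x_0$ is a welcome addition, as is your check that the segment remains in $\R^2_+$.
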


\begin{proof}
Suppose not.
Since $\alpha$ is Lipschitz with constant $L$,
\[
	\limsup_{k \to \infty} \frac{| \alpha(u_k) - \alpha(s_k)|}{|u_k - s_k|} \le L
\]
Therefore,
\[
	\liminf_{k \to \infty} \frac{| \alpha(u_k)-\alpha(s_k)|}{|u_k - s_k|} < L
\]
Let $c$ be a constant such that
\[
	\liminf_{k \to \infty} \frac{| \alpha(u_k)-\alpha(s_k)|}{|u_k-s_k|} < c < L
\]
By passing to subsequences, we may assume that for all $k$,
\[
	\frac{| \alpha(u_k)-\alpha(s_k)|}{|u_k-s_k|} < c
\]
Fix $k$ large, and define a curve $\beta$ in $\cR$ by
\[
	\beta(t) =
	\begin{cases}
		\alpha(t) & 0\le t \le s_k \\
		\alpha(s_k) + (t-s_k) \frac{\alpha(u_k)-\alpha(s_k)}{u_k-s_k} & s_k \le t \le u_k\\
		\alpha(t) & u_k \le t \le 1
	\end{cases}
\]
Write $\beta=(F_\beta,G_\beta)$.
Since $\alpha$ is injective, Lemma \ref{efex} shows there is a function $\phi$ in $\Lip_0(0,1)$ which is non-vanishing over $(0,1)$ and satisfies
\[
	\lambda_1(\beta) = \frac{ \int_0^1 \frac{| \phi' |^2 F_\beta}{| \beta' |} \,dt}{\int_0^1 | \phi |^2 | \beta' | F_\beta \,dt}
\]
Write $\alpha=(F_\alpha, G_\alpha)$.
If $k$ is sufficiently large, then over $(s_k, u_k)$,
\[
	\frac{ F_\beta(t)}{| \beta'(t)|} > \frac{ F_\alpha(t)}{| \alpha'(t)|}
\]
and
\[
	 F_\beta(t) | \beta'(t) | < F_\alpha(t) | \alpha'(t)|
\]
So if $k$ is sufficiently large,
\[
	\lambda_1(\alpha) \le \frac{ \int_0^1 \frac{| \phi' |^2 F_\alpha}{| \alpha' |} \,dt}{\int_0^1 | \phi |^2 | \alpha' | F_\alpha \,dt} < \frac{ \int_0^1 \frac{| \phi' |^2 F_\beta}{| \beta' |} \,dt}{\int_0^1 | \phi |^2 | \beta' | F_\beta \,dt} = \lambda_1(\beta)
\]
This is a contradiction, because $\lambda_1(\alpha) = \Lambda$.
\end{proof}

Now we can show that if an eigenvalue maximizing curve has constant speed, then it is differentiable.

\begin{Lemma}
\label{diff}
Let $\alpha$ be an injective curve in $\cR$ such that
\[
	\lambda_1(\alpha) = \Lambda
\]
Let $L$ be the length of $\alpha$.
Assume that, for almost every $t$ in $[0,1]$,
\[
	| \alpha'(t) | = L
\]
Then $\alpha$ is differentiable over $[0,1]$.
Moreover, for every $t$ in $[0,1]$,
\[
	|\alpha'(t)|= L
\]
\end{Lemma}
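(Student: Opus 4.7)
The goal is to show that the derivative $\alpha'(t_0)$ exists at every $t_0 \in [0,1]$ with $|\alpha'(t_0)| = L$. My plan rests on Lemma \ref{abdiff}, strict convexity of the Euclidean ball, the Lipschitz bound on $\alpha$, and (for the delicate regime) Lemma \ref{circles}. For $h>0$ small, write $f_+(h) = (\alpha(t_0+h)-\alpha(t_0))/h$ and $f_-(h) = (\alpha(t_0)-\alpha(t_0-h))/h$. Applying the ``in particular'' clause of Lemma \ref{abdiff} (with the constant sequence $s_k\equiv t_0$ or $u_k\equiv t_0$) gives $|f_\pm(h)| \to L$ as $h \to 0^+$, so every subsequential limit of either one-sided difference quotient is a vector of modulus $L$.

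Next I match left and right subsequential limits at an interior $t_0$. Suppose $\rho_k, \sigma_k \to 0^+$ are chosen so that $f_-(\rho_k) \to V_-$ and $f_+(\sigma_k) \to V_+$. Lemma \ref{abdiff} applied with $s_k = t_0-\rho_k$, $u_k = t_0+\sigma_k$, together with a subsequence along which $\sigma_k/(\sigma_k+\rho_k) \to \lambda \in (0,1)$, yields
\[
  |\lambda V_+ + (1-\lambda) V_-| = L.
\]
Combined with $|V_\pm|=L$, strict convexity of the closed disk of radius $L$ forces $V_+ = V_-$. It remains to show each one-sided limit exists. Suppose $V_1 \ne V_2$ are two distinct right cluster values along $h_k^{(1)}, h_k^{(2)} \to 0^+$. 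From the Lipschitz bound
\[
  \bigl|\alpha(t_0+h_k^{(2)}) - \alpha(t_0+h_k^{(1)})\bigr| \le L\,\bigl(h_k^{(2)} - h_k^{(1)}\bigr)
\]
and the expansions $\alpha(t_0+h_k^{(i)}) = \alpha(t_0) + h_k^{(i)} V_i + o(h_k^{(i)})$, passing to a subsequence with $h_k^{(1)}/h_k^{(2)} \to r \in [0,1]$ produces $|V_2 - rV_1| \le L(1-r)$. Squaring and using $|V_1|=|V_2|=L$ gives $V_1\cdot V_2 \ge L^2$; Cauchy--Schwarz then forces $V_1 = V_2$, contradicting the assumption --- at least when $r>0$.

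The main obstacle is the degenerate scaling $r=0$, in which the two sequences live on incomparable scales; geometrically this would mean $\alpha$ oscillates between the tangent directions $V_1$ and $V_2$ over infinitely many scales near $\alpha(t_0)$. I handle it with Lemma \ref{circles}. Since $\alpha(t_0) \in \R^2_+$ lies a positive distance $x_0$ from the axis of symmetry, the hypothesis $5r_0 \le x_0$ of Lemma \ref{circles} is satisfied for all $r_0$ small enough, and one may choose a small disc $D$ positioned on a direction separating $V_1$ from $V_2$ so that the oscillating tangent forces $\alpha$ to cross $\partial D$ at two points while leaving $D$ in between but remaining within the disc of radius $2r_0$. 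Lemma \ref{circles} supplies a contradiction, so the cluster set of $f_+$ is a singleton, and similarly for $f_-$. Together with the matching step, $\alpha'(t_0) = V_+ = V_-$ exists and has modulus $L$, proving the lemma.
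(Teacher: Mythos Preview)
Your overall architecture is sound, and the $r>0$ case is a clean elementary argument that the paper does not use. There are, however, two issues worth flagging.

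\textbf{The $r=0$ case is the real content, and it is only a sketch.} You correctly identify this as the crux and correctly reach for Lemma~\ref{circles}, but you never specify the disc $D$, nor do you verify the two crossing points, the excursion outside $D$, or the containment in the $2r_0$--ball. These details matter: for instance, placing $D$ tangent to the direction $V_1$ at $\alpha(t_0)$ gives only a borderline conclusion at $t_0+h_k^{(1)}$. One concrete fix is to pick a unit vector $W$ with $W\cdot V_1<0<W\cdot V_2$ (possible since $|V_1|=|V_2|$, $V_1\neq V_2$) and take $D$ of radius $r_0<\tfrac{1}{6}F(t_0)$ centered at $\alpha(t_0)+r_0W$. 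Then $\alpha(t_0)\in\partial D$, $\alpha(t_0+h_k^{(1)})\notin D$, and $\alpha(t_0+h_k^{(2)})\in D$ for large $k$, so one finds $t_1<t_0+h_k^{(1)}<t_2$ with $\alpha(t_1),\alpha(t_2)\in\partial D$ and $\alpha\notin D$ on $(t_1,t_2)$; the $2r_0$--containment follows from $Lh_k^{(2)}<r_0$. Once written out, this argument does not use $r=0$ at all, so your case split is superfluous.

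\textbf{Comparison with the paper.} The paper avoids any scale dichotomy. It takes, for each large $k$, the \emph{two} discs of fixed radius $r_0$ having both $\alpha(t_0)$ and $\alpha(z_k)$ on their boundaries (with $y_k<z_k$ after passing to a subsequence). Lemma~\ref{circles}, applied to each disc, traps $\alpha(y_k)$ in their lens-shaped intersection; an elementary computation shows that the angular width of this lens at $\alpha(t_0)$ is $O(|\alpha(z_k)-\alpha(t_0)|/r_0)\to 0$, contradicting the assumed lower bound on the angle $\angle\big(\alpha(y_k)-\alpha(t_0),\alpha(z_k)-\alpha(t_0)\big)$. This is shorter than your route, though your fixed-disc variant is equally valid once the details are supplied.

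\textbf{Minor point.} In your left/right matching step, the assertion that a subsequence has $\sigma_k/(\sigma_k+\rho_k)\to\lambda\in(0,1)$ is unjustified for arbitrary $\rho_k,\sigma_k$; the limit could be $0$ or $1$. This is harmless if you reorder the argument: first establish existence of each one-sided limit, then take $\rho_k=\sigma_k$ to force $\lambda=\tfrac12$.
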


\begin{proof}
We first prove that $\alpha$ is right-differentiable over $[0,1)$.
Let $t_0$ be in $[0,1)$ and suppose that $\alpha$ is not right-differentiable at $t_0$.
It follows from Lemma \ref{abdiff}, that there is a positive constant $c$ and sequences $\{ y_k \}$ and $\{ z_k \}$ in $(t_0,1]$ converging to $t_0$ such that, for all $k$, the points $\alpha(t_0), \alpha(y_k), \alpha(z_k)$ are distinct, and the interior angle at $\alpha(t_0)$ of the triangle with vertices at these points is at least $c$.
By passing to a subsequence we may assume that $y_k < z_k$ for all $k$.
Write $\alpha=(F,G)$ and fix a positive constant $r_0$ with
\[
	6 r_0 < F(t_0)
\]
For large $k$,
\[
	0 < | \alpha(z_k) - \alpha(t_0) | < r_0
\]
Then there are two closed discs of radius $r_0$ which contain $\alpha(z_k)$ and $\alpha(t_0)$ on their boundaries.
If $k$ is large, then by Lemma \ref{circles}, the point $\alpha(y_k)$ must be in the intersection of these discs.
But this implies that the interior angle at $\alpha(t_0)$ of the triangle with vertices at $\alpha(t_0), \alpha(y_k), \alpha(z_k)$ converges to zero as $k \to \infty$.
By this contradiction $\alpha$ is right-differentiable over $[0,1)$.

A symmetric argument shows that $\alpha$ is left-differentiable over $(0,1]$.
Then Lemma \ref{abdiff} implies that the left and right derivatives must agree over $(0,1)$.
Lemma \ref{abdiff} also implies that for every $t$ in $[0,1]$,
\[
	| \alpha'(t) | = L
\]
\end{proof}

The following lemma shows that if a maximizing curve has constant speed, then it is continuously differentiable.

\begin{Lemma}
\label{c1}
Let $\alpha$ be an injective curve in $\cR$ such that
\[
	\lambda_1(\alpha) = \Lambda
\]
Let $L$ be the length of $\alpha$.
Assume that $\alpha$ is differentiable over $[0,1]$ and, for every $t$ in $[0,1]$,
\[
	| \alpha'(t) | = L
\]
Then $\alpha$ is continuously differentiable over $[0,1]$.
\end{Lemma}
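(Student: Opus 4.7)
I argue by contradiction. Suppose $\alpha'$ is discontinuous at some $t_0 \in [0,1]$. Since $|\alpha'(t)| = L$ for every $t$, compactness of the circle of radius $L$ in $\R^2$ gives a subsequence $t_k \to t_0$ and a vector $v$ with $|v|=L$ and $v \ne \alpha'(t_0)$ such that $\alpha'(t_k) \to v$; without loss of generality $t_k > t_0$. The plan is to produce, for each large $k$, a small disc $D$ that verifies the hypotheses of Lemma~\ref{circles} on $[t_0, t_k+h_k]$ for a suitable $h_k > 0$, but whose interior misses the intermediate point $\alpha(t_k)$ of the arc; this contradicts the conclusion of that lemma.

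For each large $k$, the differentiability of $\alpha$ at $t_k$ allows me to pick $h_k > 0$ so small that $\alpha(t_k + h_k) = \alpha(t_k) + h_k \alpha'(t_k) + r_k$ with $|r_k|$ as small as desired relative to $h_k$. Setting $A = \alpha(t_0)$, $B = \alpha(t_k)$, $C = \alpha(t_k + h_k)$, differentiability of $\alpha$ at $t_0$ gives $B - A \approx (t_k - t_0)\alpha'(t_0)$, while the previous relation gives $C - B \approx h_k v$, both with controlled errors. Choose a radius $r_0$ with $5r_0 \leq F(t_0)$, for instance $r_0 = F(t_0)/6$. For $k$ large, $|AC| \ll r_0$, and there are two discs of radius $r_0$ whose boundary contains $A$ and $C$, centred on opposite sides of the chord $\overline{AC}$.

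In coordinates where $A$ is the origin, $\alpha'(t_0) = (L,0)$, and $v = L(\cos\theta, \sin\theta)$, a direct computation shows
\[
|B - o|^2 - r_0^2 \;\approx\; -(t_k - t_0)\, h_k\, L^2 \bigl(\cos\theta + 2\beta\sin\theta\bigr),
\]
where $o$ is the centre of the disc and $\beta = \pm\sqrt{r_0^2/|AC|^2 - 1/4}$ records the choice of disc. Since $r_0$ is fixed while $|AC| \to 0$, we have $|\beta| \to \infty$; and since $v \ne \alpha'(t_0)$ we have $\theta \ne 0$, so either $\sin\theta \ne 0$ (and the sign of $\beta$ can be chosen to make $\cos\theta + 2\beta\sin\theta$ negative) or $\sin\theta = 0$ and necessarily $\cos\theta = -1$ (making the right-hand side positive for either choice of $\beta$). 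In all cases $B$ lies strictly outside the corresponding disc $D$. On the other hand, the arc $\alpha|_{[t_0, t_k+h_k]}$ has length at most $L(t_k - t_0 + h_k) \ll r_0$ and therefore stays within $2r_0$ of the centre of $D$; Lemma~\ref{circles} would then force $B \in D$, the desired contradiction.

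The main obstacle is that the modulus of differentiability of $\alpha$ at $t_k$ is not a priori uniform in $k$, so $h_k$ may have to be chosen much smaller than $t_k - t_0$. This is overcome by keeping $r_0$ fixed so that $|\beta| \approx r_0/|AC|$ is large and amplifies the main term, compensating for the smallness of $h_k$; one then carefully verifies that the error terms from both differentiability approximations remain lower order than the amplified main term.
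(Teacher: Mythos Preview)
Your argument is correct and, like the paper's, rests on Lemma~\ref{circles}, but the two proofs deploy that lemma differently. The paper applies it to the pair $\alpha(t_0),\alpha(s_k)$ with \emph{both} discs of radius $r_0$ through those two points, so the arc $\alpha|_{[t_0,s_k]}$ is trapped in their lens-shaped intersection. At the corner $\alpha(s_k)$ this lens is a wedge of opening angle $O\bigl(|\alpha(s_k)-\alpha(t_0)|/r_0\bigr)$ about the chord direction; since $\alpha$ is differentiable at $s_k$ with $|\alpha'(s_k)|=L$, the one-sided difference quotients force $\alpha'(s_k)$ to lie (up to $o(1)$) along the chord, whose direction tends to $\alpha'(t_0)/L$ by differentiability at $t_0$. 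Your three-point construction with $A=\alpha(t_0)$, $B=\alpha(t_k)$, $C=\alpha(t_k+h_k)$ instead exhibits $B$ explicitly outside one of the two discs through $A$ and $C$, contradicting Lemma~\ref{circles} directly. The paper's lens argument is shorter and sidesteps the quantitative computation; yours is more hands-on and makes the geometric obstruction explicit.

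One refinement to your error discussion: in the degenerate case $\sin\theta=0$, $\cos\theta=-1$, the assertion that ``either choice of $\beta$'' works is valid only at leading order. Once the remainders $B-A=(t_k-t_0)\alpha'(t_0)+o(t_k-t_0)$ and $C-B=h_k\alpha'(t_k)+o_k(h_k)$ are carried through, the $\beta$-amplified error can be of the same order as the un-amplified main term $(t_k-t_0)h_kL^2$ (since $|\beta|\sim r_0/|AC|$ while the remainder is only $o(|AC|)$). The fix is simply to choose the sign of $\beta$ so that the \emph{entire} $\beta$-multiplied bracket---leading part plus remainders---is nonnegative; the positive un-amplified main term then determines the sign of $|B-o|^2-r_0^2$. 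This is in the spirit of your closing paragraph but should be stated explicitly.
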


\begin{proof}
Fix $t_0$ in $[0,1]$ and let $\{ s_k \}$ be a sequence in $[0,1]$ converging to $t_0$.
Write $\alpha=(F, G)$ and let $r_0>0$ be such that
\[
	6 r_0 < F(t_0)
\]
For large $k$, there are exactly two closed discs in $\R^2_+$ of radius $r_0$ which contain $\alpha(s_k)$ and $\alpha(t_0)$ on their boundaries.
If $k$ is large, then Lemma \ref{circles} implies that $\alpha(t)$ must lie in the intersection of these discs for all $t$ between $t_0$ and $s_k$.
By assumption, $\alpha$ is differentiable over $[0,1]$, and for all $t$ in $[0,1]$,
\[
	| \alpha'(t) | = L
\]
It follows that
\[
	\lim_{k \to \infty} | \alpha'(s_k) - \alpha'(t_0) | = 0
\]
Therefore $\alpha'$ is continuous at $t_0$.
\end{proof}

%

In the following lemma, we establish differentiability of the first eigenvalue functional along a variation of a continuously differentiable curve.

\begin{Lemma}
\label{lambdadiff}
Let $\alpha$ be a curve in $\cR$ such that
\[
	\lambda_1(\alpha)=\Lambda
\]
Let $L$ be the length of $\alpha$.
Assume that $\alpha$ is continuously differentiable over $[0,1]$ and, for every $t$ in $[0,1]$,
\[
	| \alpha'(t) | = L
\]
Let $v$ be a unit vector in $\R^2$.
Let $\psi: [0,1] \to \R$ be a smooth function with $\psi(0)=\psi(1)=0$.
For small $s$, let $\alpha_s$ be the curve in $\cR$ defined by
\[
	\alpha_s(t)=\alpha(t)+s\psi(t)v
\]
Then the function $s \mapsto \lambda_1(\alpha_s)$ is differentiable at zero.
\end{Lemma}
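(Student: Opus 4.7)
The plan is to sandwich $\lambda_1(\alpha_s)$ between two quantities each differentiable in $s$ that share the same derivative at $s=0$. This is the standard envelope approach: the upper side uses the eigenfunction at $s=0$ as a frozen test function, and the lower side uses the $s$-dependent eigenfunction, which requires establishing continuity of the eigenfunction in $s$.

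Write $\alpha_s=(F_s,G_s)$ with $F_s = F+s\psi v_1$ and $G_s = G+s\psi v_2$. Because $|\alpha'|\equiv L>0$ and $\psi$ and $\alpha$ are smooth, there is $s_0>0$ such that for $|s|<s_0$,
\[
	\tfrac{L}{2} \le |\alpha_s'(t)| \le 2L, \qquad \tfrac{1}{2}F(t) \le F_s(t) \le 2F(t)
\]
uniformly in $t\in[0,1]$. In particular $\alpha_s$ satisfies the hypotheses of Lemma \ref{efex}, which supplies a non-vanishing eigenfunction $\phi_s\in\Lip_0(0,1)$ realizing $\lambda_1(\alpha_s)$; normalize by $\int_0^1 \phi_s^2 F_s |\alpha_s'|\,dt=1$. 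For $w\in\Lip_0(0,1)$ set
\[
	R(s,w) = \frac{\int_0^1 \frac{|w'|^2 F_s}{|\alpha_s'|}\,dt}{\int_0^1 w^2 F_s |\alpha_s'|\,dt}
\]
Each integrand is smooth in $s$ with uniform bounds, so $s\mapsto R(s,w)$ is smooth; the derivative $\partial_s R(0,w)$ can be computed explicitly, and we set $D=\partial_s R(0,\phi_0)$.

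I would first establish the upper bound. Using $\phi_0$ as a test function for $\alpha_s$ yields $\lambda_1(\alpha_s)\le R(s,\phi_0)$, with equality at $s=0$. Smoothness of $R(\cdot,\phi_0)$ then gives
\[
	\limsup_{s\to 0^+} \frac{\lambda_1(\alpha_s)-\Lambda}{s} \le D, \qquad \liminf_{s\to 0^-} \frac{\lambda_1(\alpha_s)-\Lambda}{s} \ge D
\]
For the matching lower bound, use $\phi_s$ as a test function for $\alpha$: $\Lambda\le R(0,\phi_s)$ while $\lambda_1(\alpha_s)=R(s,\phi_s)$, so
\[
	\lambda_1(\alpha_s)-\Lambda \ge R(s,\phi_s)-R(0,\phi_s) = s\,\partial_s R(\sigma_s,\phi_s)
\]
for some $\sigma_s$ between $0$ and $s$ by the mean value theorem. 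Thus the lower bound reduces to showing $\partial_s R(\sigma_s,\phi_s)\to D$ as $s\to 0$.

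The main obstacle, and the step I would handle most carefully, is establishing sufficiently strong convergence $\phi_s\to\phi_0$. The steps are: (a) the upper bound shows $\lambda_1(\alpha_s)$ is bounded above, while Lemma \ref{Fbound} supplies a uniform positive lower bound, so combined with the normalization the $\phi_s$ have uniformly bounded Dirichlet energy and hence bounded $W^{1,2}$ norm; (b) by Rellich-Kondrachov a subsequence converges weakly in $W^{1,2}_0$ and strongly in $L^2$ to some $\phi_*\in\Lip_0(0,1)$; (c) passing to the limit in the weak form of the eigenvalue equation of Lemma \ref{efex} shows $\phi_*$ solves the equation for $\alpha$ with eigenvalue $\Lambda$, and simplicity of the first eigenvalue (an ODE uniqueness argument, using that $\phi_0$ does not vanish in the interior) forces $\phi_*=\pm\phi_0$; (d) convergence of the Rayleigh quotients upgrades the weak convergence to strong $W^{1,2}$ convergence. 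Since every subsequence has a sub-subsequence converging to $\phi_0$, the entire family $\phi_s$ converges to $\phi_0$ strongly. This is enough continuity to pass to the limit in the explicit formula for $\partial_s R(\sigma_s,\phi_s)$, yielding $\partial_s R(\sigma_s,\phi_s)\to D$ and thereby the differentiability of $s\mapsto\lambda_1(\alpha_s)$ at zero.
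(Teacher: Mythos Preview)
Your argument is correct and follows a genuinely different route from the paper. You run a direct envelope (Danskin--type) argument on the Rayleigh quotient, sandwiching the difference quotient between $\partial_s R(0,\phi_0)$ (from the frozen test function $\phi_0$) and $\partial_s R(\sigma_s,\phi_s)$ (from the moving eigenfunction), and close the gap by proving $\phi_s\to\phi_0$ strongly in $H_0^1$ via Rellich compactness and simplicity of the ground state. The paper instead packages the eigenvalue equation as a $C^1$ map $\Psi:\R\times(-\epsilon,\epsilon)\times H_0^1\to H^{-1}\times\R$, first establishes continuity of the curve $s\mapsto(\lambda_1(\alpha_s),\phi_s)$ at $s=0$, and then invokes the implicit function theorem; for the eigenfunction continuity it appeals to the spectral gap recorded in Lemma~\ref{efex} rather than a compactness argument, and for continuity of $\lambda_1(\alpha_s)$ it uses the maximality hypothesis $\lambda_1(\alpha)=\Lambda$ as a one--line shortcut. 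Your approach is more hands--on, avoids the operator/IFT machinery, and in fact never uses maximality, so it proves a slightly more general statement; the paper's approach is more structural and would immediately yield $C^1$ dependence of both the eigenvalue and the eigenfunction on $s$, though only differentiability at zero is needed downstream.
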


\begin{proof}
For small $s$, write $\alpha_s=(F_s,G_s)$.
By Lemma \ref{efex}, there are functions $\phi_s$ in $\Lip_0(0,1)$ such that
\[
	\lambda_1(\alpha_s) = \frac{\int_0^1 \frac{ F_s |\phi_s'|^2}{| \alpha_s' |} \,dt } { \int_0^1 F_s | \alpha_s' | \phi_s^2 \,dt}
\]
We may assume that
\[
	\int_0^1 \phi_s^2 F_s | \alpha_s'| \, dt = 1
\]
Let $H_0^1$ be the Sobolev space of continuous functions $w:[0,1] \to \R$ which vanish at the endpoints and have a weak derivative in $L^2(0,1)$.
Let $H^{-1}$ be the dual space of $H_0^1$.
Define operators $\L_s:H_0^1 \to H^{-1}$ by
\[
	(\L_s u , v) = \int_0^1 \frac{F_s u'v'}{|\alpha'_s|} \,dt
\]
Also define operators $i_s:L_2 \to H^{-1}$ by
\[
	(i_s u, v) = \int_0^1 uv F_s | \alpha'_s| \,dt
\]
Then
\[
	\L_s \phi_s = \lambda_1(\alpha_s) i_s \phi_s
\]
Let $\epsilon>0$ be small and define $\Psi: \R \times (-\epsilon, \epsilon) \times H_0^1 \to H^{-1} \times \R$ by
\[
	\Psi(\lambda,s,u) = \bigg( \L_s u - \lambda i_s u , \int_0^1 u^2 F_s |\alpha_s'| \,dt \bigg)
\]
This map is continuously differentiable.
Moreover,
\[
	\Psi(\lambda_1(\alpha_s), s, \phi_s) = (0, 1)
\]
Note that
\[
	0 \le \lambda_1(\alpha_0) - \lambda_1(\alpha_s) \le \frac{\int_0^1 \frac{ F_0 |\phi_s'|^2}{| \alpha_0' |} \,dt } { \int_0^1 F_0 | \alpha_0' | \phi_s^2 \,dt} - \frac{\int_0^1 \frac{ F_s |\phi_s'|^2}{| \alpha_s' |} \,dt } { \int_0^1 F_s | \alpha_s' | \phi_s^2 \,dt}
\]
Moreover
\[
	\lim_{s \to 0} \bigg( \frac{\int_0^1 \frac{ F_0 |\phi_s'|^2}{| \alpha_0' |} \,dt } { \int_0^1 F_0 | \alpha_0' | \phi_s^2 \,dt} - \frac{\int_0^1 \frac{ F_s |\phi_s'|^2}{| \alpha_s' |} \,dt } { \int_0^1 F_s | \alpha_s' | \phi_s^2 \,dt} \bigg) = 0
\]
Therefore
\[
	\lim_{s \to 0} \lambda_1(\alpha_s) = \lambda_1(\alpha_0)
\]
It then follows from the last statement in Lemma \ref{efex} that
\[
	\lim_{s \to 0} \phi_s = \phi_0
\]
with convergence in $H_0^1$.
That is, the curve in $\R \times H_0^1$ defined by
\[
	s \mapsto \Big( \lambda_1(\alpha_s), \phi_s \Big)
\]
is continuous at $s=0$.
It then follows from the implicit function theorem that this curve is differentiable at $s=0$.
\end{proof}

\begin{Lemma}
\label{ode}
Let $\alpha$ be a curve in $\cR$ and assume that $\lambda_1(\alpha)=\Lambda$.
Let $L$ be the length of $\alpha$.
Assume that $\alpha$ is continuously differentiable over $[0,1]$, and for every $t$ in $[0,1]$,
\[
	| \alpha'(t) | = L
\]
Write $\alpha=(F,G)$.
Let $\phi$ be in $\Lip_0(0,1)$ and assume that
\[
	\lambda_1(\alpha) = \frac{ \int_0^1 \frac{ F | \phi' |^2 } {L} \,dt }{ \int_0^1 F \phi^2 L \,dt }
\]
Then the following equations hold weakly over $(0,1)$,
\[
\begin{cases}
	\Big( ( |\phi'|^2 + \Lambda L^2 \phi^2) FF' \Big) ' = \Lambda L^4 \phi^2 - L^2 |\phi'|^2 \\
	\Big( (| \phi' |^2 + \Lambda L^2 \phi^2 ) FG' \Big) ' = 0
\end{cases}
\]
\end{Lemma}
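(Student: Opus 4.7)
The plan is to derive the two equations as the Euler--Lagrange identities associated with horizontal and vertical variations of the maximizing curve $\alpha$, using the differentiability provided by Lemma \ref{lambdadiff}.

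First, for any $\psi \in C_0^\infty(0,1)$ and any unit vector $v \in \R^2$, I would consider the variation $\alpha_s(t) = \alpha(t) + s\psi(t) v$. Since $\psi(0)=\psi(1)=0$, we have $\alpha_s(0)=p$ and $\alpha_s(1)=q$; and since $\alpha([0,1])$ is compact in $\R^2_+$, the curve $\alpha_s$ lies in $\cR$ for all sufficiently small $|s|$. By the maximality assumption $\lambda_1(\alpha_s) \le \Lambda = \lambda_1(\alpha_0)$, and by Lemma \ref{lambdadiff} the map $s \mapsto \lambda_1(\alpha_s)$ is differentiable at $s=0$, so its derivative must vanish.

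Next I would compute this derivative. Normalize the eigenfunctions from Lemma \ref{efex} so that $\int_0^1 F_s \phi_s^2 |\alpha_s'|\,dt = 1$; then $\lambda_1(\alpha_s) = \int_0^1 F_s |\phi_s'|^2 / |\alpha_s'|\,dt$. Differentiating the identity $\lambda_1(\alpha_s)\int_0^1 F_s\phi_s^2|\alpha_s'|\,dt = \int_0^1 F_s |\phi_s'|^2 /|\alpha_s'|\,dt$ at $s=0$ and writing a dot for $\frac{d}{ds}\bigl|_{s=0}$, the terms containing $\dot\phi_0$ and $\dot\phi_0'$ are $2\Lambda \int_0^1 FL\phi\dot\phi_0\,dt - 2\int_0^1 \frac{F\phi'\dot\phi_0'}{L}\,dt$. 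Since Lemma \ref{lambdadiff} gives $\dot\phi_0 \in H_0^1$ and $\phi = \phi_0$ solves the weak eigenvalue equation from Lemma \ref{efex}, these terms cancel. Using $\alpha_s = (F+s\psi v_1, G+s\psi v_2)$, one computes $\dot F = \psi v_1$ and $\dot{|\alpha'|} = \psi'(\alpha'\cdot v)/L$. Substituting and multiplying through by $L^3$ leaves the first-order condition
\begin{equation*}
0 = \int_0^1 \psi v_1\bigl(L^2|\phi'|^2 - \Lambda L^4 \phi^2\bigr)\,dt - \int_0^1 \psi' F(\alpha'\cdot v)\bigl(|\phi'|^2 + \Lambda L^2\phi^2\bigr)\,dt
\end{equation*}
for every $\psi \in C_0^\infty(0,1)$ and every unit vector $v$.

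Finally I would specialize $v$. Choosing $v=(0,1)$ makes $v_1=0$ and $\alpha'\cdot v = G'$, leaving $\int_0^1 \psi'\, FG'(|\phi'|^2+\Lambda L^2\phi^2)\,dt = 0$ for all admissible $\psi$, which is the weak form of the second equation. Choosing $v=(1,0)$ gives $v_1=1$ and $\alpha'\cdot v = F'$, and the resulting identity is exactly the weak form of the first equation, as one sees by integrating the $\psi'$-term formally by parts.

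The main obstacle is the cancellation of the $\dot\phi_0$ contribution in the derivative computation; this requires the normalization of $\phi_s$, the $H_0^1$-differentiability of $s\mapsto \phi_s$ from Lemma \ref{lambdadiff}, and the weak eigenvalue equation from Lemma \ref{efex} applied with $\dot\phi_0$ as test function. Once this is in hand, the remainder of the argument is a direct substitution and a splitting into the two coordinate directions.
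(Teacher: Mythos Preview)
Your argument is correct and follows the same overall strategy as the paper: perturb $\alpha$ by $s\psi v$, invoke Lemma~\ref{lambdadiff} and maximality to force the $s$-derivative to vanish, then read off the weak equations from the horizontal ($v=(1,0)$) and vertical ($v=(0,1)$) directions. The computation you sketch checks out and yields exactly the two identities claimed.

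The one difference worth pointing out is how the $\dot\phi_0$ contribution is eliminated. You carry the moving eigenfunction $\phi_s$ through the differentiation and then cancel the $\dot\phi_0$ terms using the weak eigenvalue equation from Lemma~\ref{efex}, which requires the $H_0^1$-differentiability of $s\mapsto\phi_s$ established inside the proof of Lemma~\ref{lambdadiff}. The paper sidesteps this entirely: it freezes the test function at $\phi=\phi_0$ and considers the auxiliary Rayleigh quotient $R(s)=\int \frac{F_s|\phi'|^2}{|\alpha_s'|}\big/\int F_s|\alpha_s'|\phi^2$. Since $R(s)\ge\lambda_1(\alpha_s)$ with equality at $s=0$, and $\lambda_1(\alpha_s)$ is differentiable with vanishing derivative, $R'(0)=0$ as well; computing $R'(0)$ involves no $\dot\phi_0$ at all. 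Your route is the standard Hellmann--Feynman argument and is perfectly rigorous here; the paper's frozen-test-function trick is just a shortcut that avoids ever touching $\dot\phi_0$.
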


\begin{proof}
To establish the first equation, let $\psi:[0,1] \to \R$ be a smooth function with $\psi(0)=\psi(1)=0$.
For small $s$, define $F_s=F+s\psi$, and let $\alpha_s=(F_s, G)$.
Then each $\alpha_s$ is a curve in $\cR$, and the function $s \mapsto \lambda_1(\alpha_s)$ is differentiable at zero by Lemma \ref{lambdadiff}.
The derivative at zero must vanish.
For small $s$, define
\[
	R(s) = \frac{ \int_0^1 \frac{| \phi' |^2 F_s}{| \alpha'_s|} \,dt}{\int_0^1  \phi^2 F_s | \alpha'_s| \,dt}
\]
Then $\lambda_1(\alpha)=R(0)$ and $\lambda_1(\alpha_s) \le R(s)$ for small $s$.
The function $R$ is differentiable at zero, so its derivative at zero must vanish, as well.
That is,
\[
	\int_0^1 L^2 | \phi' |^2 \psi - | \phi' |^2 F F' \psi' \,dt = \int_0^1 \Lambda L^4 \phi^2 \psi + \Lambda L^2 \phi^2 F F' \psi' \,dt
\]
This is the weak formulation of the equation
\[
	\Big((|\phi'|^2 + \Lambda L^2 \phi^2) F F'\Big)' = \Lambda L^4 \phi^2 - L^2 |\phi'|^2
\]

To establish the second equation, let $\eta:[0,1] \to \R$ be a smooth function with $\psi(0)=\psi(1)=0$.
For small $s$, define $G_s=G+s\eta$ and let $\alpha_s=(F, G_s)$.
Then each $\alpha_s$ is a curve in $\cR$, and the function $s \mapsto \lambda_1(\alpha_s)$ is differentiable at zero by Lemma \ref{lambdadiff}.
The derivative at zero must vanish.
For small $s$, define
\[
	Q(s) = \frac{ \int_0^1 \frac{| \phi' |^2 F}{| \alpha'_s|} \,dt}{\int_0^1  \phi^2 F | \alpha'_s| \,dt}
\]
Then $\lambda_1(\alpha)=Q(0)$ and $\lambda_1(\alpha_s) \le Q(s)$ for small $s$.
The function $Q$ is differentiable at zero, so its derivative at zero must vanish, as well.
That is,
\[
	\int_0^1 | \phi' |^2 F G' \eta' + \Lambda L^2 \phi^2 F G' \eta' \,dt = 0
\]
This is the weak formulation of the equation
\[
	((|\phi'|^2 + \Lambda L^2 \phi^2) F G'\Big)' = 0
\]
\end{proof}

\begin{Lemma}
\label{smooth}
Let $\alpha$ be a curve in $\cR$ and assume that $\lambda_1(\alpha)=\Lambda$.
Let $L$ be the length of $\alpha$.
Assume that $\alpha$ is continuously differentiable over $[0,1]$ and, for every $t$ in $[0,1]$,
\[
	| \alpha'(t) | = L
\]
Then $\alpha$ is smooth.
\end{Lemma}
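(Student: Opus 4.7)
The plan is a standard bootstrap on the two weak ODEs of Lemma~\ref{ode} together with the weak eigenvalue equation from Lemma~\ref{efex}, after first verifying that no denominator in the bootstrap ever vanishes. Set $E = |\phi'|^2 + \Lambda L^2 \phi^2$. The second equation of Lemma~\ref{ode} says $(E F G')' = 0$ weakly, so $E F G' \equiv c$ for some constant. The first equation says $(E F F')' = \Lambda L^4 \phi^2 - L^2 |\phi'|^2$, whose right-hand side is continuous. So the scheme is: (i) establish $E, F > 0$, (ii) upgrade $\phi$ using the eigenvalue equation, (iii) upgrade $F$ and $G$ using the two equations above, and (iv) iterate.

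First I would verify positivity. The function $F$ is continuous with values in $\R_+^2$, so $F>0$ on $[0,1]$. For $E$, recall $\phi$ is a nontrivial weak solution of the linear ODE
\[
    -\left( \frac{F}{L} \phi' \right)' = \Lambda F L \phi.
\]
Since $F \in C^1$ and $F > 0$, this is a regular second-order linear ODE after we divide by $F/L$, so the Cauchy–Lipschitz uniqueness theorem applies: if $\phi(t_0) = \phi'(t_0) = 0$ at any $t_0 \in [0,1]$ then $\phi \equiv 0$, contradicting that $\phi$ is non-vanishing on $(0,1)$ and an eigenfunction. Hence $E > 0$ on $[0,1]$.

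Next I would run the bootstrap. Initially $F, G \in C^1$ and $\phi$ is Lipschitz so $\phi' \in L^\infty$. From the weak equation $-(F\phi'/L)' = \Lambda F L \phi$ the right-hand side is continuous, so $F \phi'$ is $C^1$; dividing by the $C^1$ positive function $F$ gives $\phi' \in C^0$, and then $\phi'' = -(F'/F)\phi' - \Lambda L^2 \phi$ is continuous, so $\phi \in C^2$. Consequently $E \in C^1$, and $E F > 0$ is $C^1$. From $EFG' = c$ we conclude $G' = c/(EF) \in C^1$, so $G \in C^2$. From $(EFF')' = \Lambda L^4 \phi^2 - L^2 |\phi'|^2 \in C^0$ the product $EFF'$ lies in $C^1$, and dividing by the $C^1$ positive function $EF$ gives $F' \in C^1$, i.e.\ $F \in C^2$.

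Now assume inductively that $\alpha \in C^k$ and $\phi \in C^{k+1}$ for some $k \ge 2$. The expanded eigenvalue equation $\phi'' = -(F'/F)\phi' - \Lambda L^2 \phi$ has $C^{k-1}$ right-hand side, so $\phi \in C^{k+1}$ (already known) improves to $\phi \in C^{k+2}$. Then $E \in C^{k+1}$, and the two equations from Lemma~\ref{ode} give $G' = c/(EF) \in C^{k+1}$ and $F' = (EFF')/(EF) \in C^{k+1}$ where $(EFF')$ has derivative in $C^{k+1}$ and hence lies in $C^{k+2}$. Thus $\alpha \in C^{k+1}$, and induction yields $\alpha \in C^\infty$. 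The only delicate point I foresee is the positivity claim $E > 0$; once that is in hand the rest is routine division in $C^k$-algebras.
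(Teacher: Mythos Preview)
Your approach matches the paper's: bootstrap using the eigenvalue equation together with the two weak ODEs of Lemma~\ref{ode}, after checking that $E = |\phi'|^2 + \Lambda L^2 \phi^2$ is everywhere positive.

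The inductive step, however, is written in the wrong order and does not close as stated. From $\alpha \in C^k$ and $\phi \in C^{k+1}$, the right side of $\phi'' = -(F'/F)\phi' - \Lambda L^2 \phi$ is only $C^{k-1}$ (since $F' \in C^{k-1}$), so you only recover $\phi \in C^{k+1}$, not $C^{k+2}$ as you claim. The correct order, which is what the paper uses, is to upgrade $\alpha$ first: with $\phi \in C^{k+1}$ one has $E \in C^k$, and the relations $EFG' \equiv c$ and $(EFF')' = \Lambda L^4\phi^2 - L^2|\phi'|^2 \in C^k$ give $G', F' \in C^k$, hence $\alpha \in C^{k+1}$; only then does the eigenvalue equation, now with $F' \in C^k$, lift $\phi$ to $C^{k+2}$. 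Your base case has the same mismatch: it ends with $\alpha \in C^2$ and $\phi \in C^2$, which does not meet your inductive hypothesis $\phi \in C^{k+1}$ at $k=2$. One more pass through the eigenvalue equation after reaching $F \in C^2$ (or simply starting the induction at $k=1$, as the paper does) fixes this.

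A smaller ordering point: the Cauchy--Lipschitz argument for $E>0$ presupposes that $\phi$ is a classical solution, which you only establish in the following paragraph. The paper handles this by first noting that the weak eigenvalue equation with $F \in C^1$ already forces $\phi \in C^2$, and then observing $\phi'(0), \phi'(1) \neq 0$ by ODE uniqueness; combined with $\phi \neq 0$ on $(0,1)$ this gives $E>0$ on all of $[0,1]$.
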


\begin{proof}
Write $\alpha=(F,G)$.
By Lemma \ref{efex}, there is a function $\phi$ in $\Lip_0(0,1)$ which is non-vanishing over $(0,1)$ and satisfies
\[
	\lambda_1(\alpha) = \frac{ \int_0^1 \frac{ F | \phi' |^2 } {L} \,dt }{ \int_0^1 F \phi^2 L \,dt }
\]
Moreover, Lemma \ref{efex} states that, weakly over $(0,1)$,
\[
	-\bigg( \frac{F \phi'}{L} \bigg)' = \Lambda L \phi F
\]
Now the functions $F$ and $G$ are $C^1$, so $\phi$ is $C^2$ and the equation holds strongly.
In particular, $\phi'$ does not vanish at $0$ or $1$.
We complete the argument by induction.
Let $k=1,2,3,\ldots$ and assume that $F$ and $G$ are $C^k$ and $\phi$ is $C^{k+1}$.
Then
\[
	(| \phi' |^2 + \Lambda L^2 \phi^2 )^{-1}
\]
is $C^k$.
Lemma \ref{ode} implies that $F$ and $G$ are $C^{k+1}$.
This in turn implies that $\phi$ is $C^{k+2}$.
By induction, the functions $F$, $G$, and $\phi$ are all smooth.
\end{proof}

We can now prove Lemma \ref{exreg}.

\begin{proof}[Proof of Lemma 1.2]
By Lemma \ref{inject}, there is an injective curve $\alpha$ in $\cR$ of length $L$ such that
\[
	\lambda_1(\alpha) = \Lambda
\]
and for almost every $t$ in $[0,1]$,
\[
	| \alpha'(t) | = L
\]
By Lemma \ref{diff}, the curve $\alpha$ is differentiable over $[0,1]$ and, for every $t$ in $[0,1]$,
\[
	| \alpha'(t) | = L
\]
By Lemma \ref{c1}, $\alpha$ is continuously differentiable over $[0,1]$.
By Lemma \ref{smooth}, the curve $\alpha$ is smooth.
Therefore,
\[
	\lambda_1(\alpha) = \sup \{ \lambda_1(\gamma) : \gamma \in \cC \}
\]
\end{proof}

\section{Uniqueness}

In this section we prove Lemma \ref{unique}.
We first show that maximizing curves yield solutions to an initial value problem.

\begin{Lemma}
\label{maxcrit}
Let $\alpha$ be a curve in $\cC$ such that
\[
	\lambda_1(\alpha) = \Lambda
\]
Let $L$ be the length of $\alpha$ and assume that for all $t$ in $[0,1]$,
\[
	| \alpha'(t) | = L
\]
Define $\beta:[0,L] \to \R^2_+$ by
\[
	\beta(t) = \alpha(t/L)
\]
Let $\Theta$ be in $\R$ be such that
\[
	\beta'(0) = ( \cos \Theta, \sin \Theta)
\]
Write $\beta=(F,G)$.
Then there are smooth functions $v:[0,L] \to \R$ and $\theta:[0,L] \to \R$ with $v$ positive over $(0,L)$ and vanishing at $L$ such that $v$, $\theta$, $F$, and $G$ satisfy
\[
\begin{cases}
	v'' = - ( \frac{ \cos \theta}{F} ) v' - \Lambda v \\
	\theta' = \frac{\sin \theta ( | v'| ^2 - \Lambda v^2 )}{ F ( | v' |^2 + \Lambda v^2 ) } \\
	F' = \cos \theta \\
	G' = \sin \theta \\
	v(0) = 0, \quad v'(0) = 1 \\
	\theta(0) = \Theta \\
	F(0) = p_1, \quad G(0) = p_2 \\
\end{cases}
\]
The equations hold over $(0,L)$.
Moreover, the function $v$ is non-vanishing over $(0,L)$ and satisfies $v(L)=0$.
\end{Lemma}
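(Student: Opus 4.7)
The plan is to read off $v$ from the first eigenfunction and $\theta$ from the unit tangent of the arclength reparametrization, then obtain the stated ODEs by rescaling the results of Lemma \ref{efex} and Lemma \ref{ode}. Since $\lambda_1(\alpha) = \Lambda$ and $|\alpha'| \equiv L$, Lemma \ref{smooth} makes $\alpha$ smooth, and Lemma \ref{efex} gives a Lipschitz eigenfunction $\phi$ on $[0,1]$ that vanishes only at the endpoints and weakly satisfies $-(F_\alpha \phi'/L)' = \Lambda L F_\alpha \phi$. Since $F_\alpha \in C^\infty[0,1]$ is positive, this Sturm--Liouville equation has smooth, non-degenerate coefficients, so $\phi$ is smooth on $[0,1]$, and uniqueness for the linear IVP forces $\phi'(0) \neq 0$. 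After possibly replacing $\phi$ by $-\phi$, we may assume $\phi > 0$ on $(0,1)$, so $\phi'(0) > 0$; then $v(s) := (L/\phi'(0))\,\phi(s/L)$ is smooth, positive on $(0,L)$, and satisfies $v(0) = v(L) = 0$ and $v'(0) = 1$. Defining $\beta(t) = \alpha(t/L)$, the curve $\beta$ is smooth with $|\beta'| \equiv 1$, so the unit tangent admits a smooth lift $\theta \in C^\infty[0,L]$ with $\beta' = (\cos\theta, \sin\theta)$ and $\theta(0) = \Theta$; the equations $F' = \cos\theta$, $G' = \sin\theta$ and the initial values $F(0) = p_1$, $G(0) = p_2$ hold by construction.

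A direct change of variables $s = Lt$ converts the Sturm--Liouville equation for $\phi$ into $(F v')' = -\Lambda F v$ on $(0,L)$, which, after expanding by $F' = \cos\theta$, is precisely $v'' = -(\cos\theta/F) v' - \Lambda v$. The same rescaling converts the two identities of Lemma \ref{ode} into
\[
\bigl[(|v'|^2 + \Lambda v^2) F \cos\theta\bigr]' = \Lambda v^2 - |v'|^2, \qquad \bigl[(|v'|^2 + \Lambda v^2) F \sin\theta\bigr]' = 0.
\]
Using the eigenvalue equation for $v$, a short computation gives $\bigl((|v'|^2 + \Lambda v^2) F\bigr)' = (\Lambda v^2 - |v'|^2)\cos\theta$. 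Expanding the two displayed equations by the product rule and substituting, each reduces to a factor of $\cos\theta$ or $\sin\theta$ multiplying the single expression $(|v'|^2 - \Lambda v^2)\sin\theta - (|v'|^2 + \Lambda v^2) F \theta'$. Since at every $s \in (0,L)$ at least one of $\cos\theta$, $\sin\theta$ is nonzero, this expression must vanish, yielding the stated formula for $\theta'$.

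The main obstacle is keeping track of the arclength rescaling and, in particular, handling the points where $\cos\theta$ or $\sin\theta$ vanishes; only the two identities of Lemma \ref{ode} taken together produce $\theta'$ at every such point. Well-definedness of the $\theta'$ formula is also worth noting: $F > 0$ on $[0,L]$, and $|v'|^2 + \Lambda v^2$ cannot vanish, since $v(s_0) = v'(s_0) = 0$ together with linear ODE uniqueness would give $v \equiv 0$, contradicting $v'(0) = 1$.
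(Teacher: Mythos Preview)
Your proof is correct and actually more self-contained than the paper's. The paper derives the equation for $\theta'$ by passing to the surface of revolution $\Sigma$ and invoking Berger's variational formula for the first eigenvalue under normal deformations, which reads $(|\nabla\phi|^2 - \Lambda\phi^2)H = 2\,\II(\nabla\phi,\nabla\phi)$; computing $H$ and $\II$ for a surface of revolution and inserting the rotationally symmetric eigenfunction gives $(|v'|^2 - \Lambda v^2)(\theta' + \sin\theta/F) = 2|v'|^2\theta'$, which rearranges to the stated formula. You instead stay in the one-dimensional setting and recycle Lemma~\ref{ode}, whose two identities are precisely the weak Euler--Lagrange equations derived earlier from Lemma~\ref{lambdadiff}. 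Your key algebraic observation---that after using the $v''$ equation to compute $\bigl((|v'|^2+\Lambda v^2)F\bigr)'$, both identities factor as $\sin\theta$ (resp.\ $\cos\theta$) times the single expression $(|v'|^2-\Lambda v^2)\sin\theta - (|v'|^2+\Lambda v^2)F\theta'$---cleanly yields $\theta'$ at every point without appealing to extrinsic geometry. This buys you a proof that avoids the external citation to Berger and uses only machinery already developed in Section~3; the paper's route, by contrast, is shorter once Berger's formula is granted and makes the geometric content (criticality of the immersion) more transparent. One minor remark: your invocation of Lemma~\ref{smooth} is unnecessary, since $\alpha\in\cC$ is smooth by hypothesis.
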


\begin{proof}
There is a smooth function $\theta: [0,L] \to \R$ such that $\theta(0)=\Theta$ and for all $t$ in $[0,L]$,
\[
	\beta'(t) = \Big( \cos \theta(t), \sin \theta(t) \Big)
\]
Let $\Sigma$ be the surface of revolution obtained from $\beta$.
Let $\phi: \Sigma \to \R$ be a first Dirichlet eigenfunction.
Since $\Sigma$ is a surface of revolution, the first Dirichlet eigenfunction $\phi$ is rotationally symmetric.
That is, it can be identified with a smooth function $v:[0,L] \to \R$, which is non-vanishing over $(0,L)$ and satisfies
\[
	v'' = - \bigg( \frac{\cos \theta}{F} \bigg) v' - \Lambda v
\]
Additionally, we have $v(0)=0$ and $v(L)=0$.
We may also assume that $v'(0)=1$.
Then the immersion $\Sigma \to \R^3$ must be critical with respect to normal, linear variations that preserve the rotational symmetry.
It follows from Berger's variational formula \cite{B} that
\[
	(| \grad \phi |^2 - \Lambda \phi^2 ) H = 2\II(\grad \phi, \grad \phi)
\]
Here $\grad$ is the gradient defined with respect to the pullback metric, and $\II$ and $H$ are the second fundamental form and mean curvature.
This equation can be rewritten as
\[
	(|v'|^2 - \Lambda v^2) \bigg( \theta' + \frac{ \sin \theta}{F} \bigg) = 2 | v' |^2 \theta'
\]
\end{proof}

Let $D$ be a disc in $\R^2$ of radius $p_1$.
Let $\lambda_1(D)$ be the first Dirichlet eigenvalue of $D$.

\begin{Lemma}
\label{critex}
Let $\Theta$ be in $\R$ and let $\lambda>\lambda_1(D)$.
Let $v$, $\theta$, $F$, and $G$ solve the initial value problem
\[
\begin{cases}
	v'' = - ( \frac{ \cos \theta}{F} ) v' - \lambda v \\
	\theta' = \frac{\sin \theta ( | v'| ^2 - \lambda v^2 )}{ F ( | v' |^2 + \lambda v^2 ) } \\
	F' = \cos \theta \\
	G' = \sin \theta \\
	v(0) = 0, \quad v'(0) = 1 \\
	\theta(0) = \Theta \\
	F(0) = p_1, \quad G(0) = p_2 \\
\end{cases}
\]
Then there is a positive $L$ in the maximal interval of existence such that $v$ is positive over $(0,L)$ with $v(L)=0$.
\end{Lemma}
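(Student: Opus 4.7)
The plan is to combine standard ODE theory with a Sturm--Liouville / Pr\"ufer analysis that invokes the hypothesis $\lambda > \lambda_1(D)$ through a comparison with the Bessel equation.

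First, by the Picard--Lindelof theorem, the smooth IVP admits a unique solution on a maximal interval $[0, L^*)$ on which $F > 0$. Since $|F'| = |\cos\theta| \le 1$ and $F(0) = p_1$, we have $F(t) \ge p_1 - t$, so $L^* \ge p_1$; if $L^* < \infty$ the only way the solution can fail to continue is if $F \to 0$ as $t \to L^*$. The algebraic observation that $F' = \cos\theta$ allows the $v$-equation to be rewritten in Sturm--Liouville form as
\[
(Fv')' + \lambda F v = 0.
\]

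Next I would introduce Pr\"ufer variables $v = \rho \sin\phi$ and $Fv' = \rho \cos\phi$, with $\phi(0) = 0$ and $\rho(0) = p_1$. A direct computation gives
\[
\phi' = \frac{\cos^2\phi}{F} + \lambda F \sin^2\phi > 0,
\]
so $\phi$ is strictly increasing on $[0, L^*)$, and finding $L$ with $v(L) = 0$ and $v > 0$ on $(0, L)$ is equivalent to finding $L$ with $\phi(L) = \pi$. Differentiating $(|v'|^2 + \lambda v^2) F \sin\theta$ along the flow and using both the $v$-equation and the $\theta$-equation gives the conservation law
\[
(|v'|^2 + \lambda v^2) F \sin\theta \equiv p_1 \sin \Theta,
\]
which will be used to control $\sin\theta$ (and hence $\cos\theta = F'$) in terms of solution quantities.

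The main step is to show that $\phi$ attains $\pi$ inside $[0, L^*)$. I would argue by contradiction, assuming $\phi < \pi$ throughout, and split into cases. If $L^* = \infty$ and $F$ stays between two positive constants, $\phi'$ is bounded below by a positive constant, so $\phi \to \infty$, contradiction. If $L^* = \infty$ and $F \to \infty$, then $\phi' \ge \lambda F \sin^2\phi$ forces $\phi$ past $\pi$ (it cannot stall at $\pi/2$, where $\phi' = \lambda F \to \infty$). The decisive case is $L^* < \infty$ with $F \to 0$ as $t \to L^*$: here I would apply Sturm's comparison theorem between $(Fv')' + \lambda F v = 0$ and the Bessel equation $(rw')' + \lambda r w = 0$. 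The hypothesis $\lambda > \lambda_1(D) = (j_{0,1}/p_1)^2$ guarantees that $J_0(\sqrt{\lambda}\,r)$ has its first positive zero at $j_{0,1}/\sqrt{\lambda} < p_1$, and Sturm separation then forces $v$ to have a zero strictly before $t$ reaches $L^*$.

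The main obstacle is this last case, since our independent variable is arc length $t$ rather than $F$, and $F$ need not be monotone. The conservation law is the key tool here: it pins down the asymptotics of $v$, $v'$ and $\theta$ as $F \to 0$, allowing the classical Bessel/disc Sturm separation (naturally written in the variable $F$) to be transferred into a statement about the arc-length interval $[0, L^*)$.
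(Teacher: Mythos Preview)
Your plan is genuinely different from the paper's, and the conservation law $(|v'|^2+\lambda v^2)F\sin\theta\equiv p_1\sin\Theta$ is a nice observation the paper does not exploit. The paper does not use Pr\"ufer angles or Sturm comparison at all. Instead it argues by contradiction against maximality of the existence interval: assuming $v>0$ on the whole maximal interval $[0,\mu)$, it first proves (Lemma~\ref{critjacobi}) the spectral inequality $\lambda\le\lambda_1(\Sigma_h)$ for every $h\in(0,\mu)$, simply by integrating $\phi\Delta\Phi-\Phi\Delta\phi$ against the positive first eigenfunction $\Phi$ of $\Sigma_h$. Feeding this into the earlier annulus estimate (Lemma~\ref{Fbound}) and the hypothesis $\lambda>\lambda_1(D)=\inf_{0<r<p_1}\lambda_1(A_{r,p_1})$ forces $a=\inf F>0$ and $b=\sup F<\infty$. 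Then Lemma~\ref{Lbound} gives $\mu<\infty$, Green's identity bounds $v,v'$, and a second Green's-identity argument bounds $|v'|^2+\lambda v^2$ from below; hence the solution extends past $\mu$, a contradiction. So the paper brings $\lambda>\lambda_1(D)$ in through a geometric eigenvalue comparison, not through Bessel asymptotics.

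There are two genuine gaps in your plan. First, your case split for $L^*=\infty$ is incomplete: you treat ``$F$ between two positive constants'' and ``$F\to\infty$'', but not $\liminf F=0$ or $F$ oscillating unboundedly. Your conservation law does not immediately exclude these, since $|v'|^2+\lambda v^2$ can blow up along sequences where $F\to0$ (indeed $\frac{d}{dt}(|v'|^2+\lambda v^2)=-2(\cos\theta/F)|v'|^2$ is large when $F$ is small and $\cos\theta<0$). Second, and more seriously, the ``decisive case'' $L^*<\infty$, $F\to0$, is exactly where you lean on a Sturm/Bessel comparison that is not set up: the comparison you want is naturally in the variable $r=F$, but $F$ need not be monotone, so there is no global change of variables, and the classical Sturm separation theorem does not apply to $(F(t)v')'+\lambda F(t)v=0$ versus $(rw')'+\lambda rw=0$ on different domains. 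You flag this as the main obstacle and say the conservation law ``pins down the asymptotics'', but you do not say what those asymptotics are or how they convert the Bessel zero $j_{0,1}/\sqrt{\lambda}<p_1$ into a zero of $v$ on $[0,L^*)$; even in the simplest instance $\Theta=\pi$ (so $F(t)=p_1-t$ and the equation \emph{is} Bessel's equation) the conclusion already requires knowing $\lambda_1(A_{r,p_1})\to\lambda_1(D)$ as $r\to0$, which is the same Rauch--Taylor input the paper uses. The cleanest fix is to adopt the paper's bridge lemma $\lambda\le\lambda_1(\Sigma_h)$: it immediately rules out both $F\to0$ and $F\to\infty$ under $\lambda>\lambda_1(D)$, collapsing your case analysis to the single easy case ``$L^*=\infty$, $F$ bounded between positive constants'', where your Pr\"ufer bound $\phi'\ge\min(1/b,\lambda a)>0$ (or, equivalently, the paper's length bound) finishes the job.
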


We will prove this in the next section.
For now we assume the lemma and use it complete the uniqueness part of the theorem.
Let $B>\lambda_1(D)$ and define
\[
	U = \{ (x,y) \in \R^2: x^2+y^2 < 1/B \}
\]
Define a map $\Phi: U \to \R^2_+$ as follows.
First define $\Phi(0,0)=p$.
Now let $(x,y)$ be a point in $U$, with $(x,y) \neq (0,0)$.
There is a $\lambda > B$ and a $\Theta$ in $\R$ such that
\[
	(x,y)=\lambda^{-1/2} (\cos \Theta, \sin \Theta)
\]
Let $v$, $\theta$, $F$, $G$, and $L$ be as in Lemma \ref{critex}.
Now define
\[
	\Phi(x,y) = \Big( F(L), G(L) \Big)
\]

\begin{Lemma}
\label{diffeo}
If $B$ is sufficiently large, then $\Phi$ is a $C^1$ diffeomorphism of $U$ onto a neighborhood of $p$ in $\R^2_+$.
\end{Lemma}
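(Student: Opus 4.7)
My plan is to convert the singular limit $\lambda \to \infty$ into a regular perturbation problem at parameter $r := \lambda^{-1/2} = 0$, read off the differential of $\Phi$ at the origin from the resulting leading-order ODE, and then obtain a global diffeomorphism by a mean value argument on the shrinking disc $U$.

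First I would rescale time via $\tau = t/r$ and set $\tilde v(\tau) = v(r\tau)/r$, $\tilde\theta(\tau)=\theta(r\tau)$, $\tilde F(\tau)=F(r\tau)$, $\tilde G(\tau)=G(r\tau)$. Using $\lambda r^2 = 1$, the initial value problem of Lemma \ref{critex} becomes a system in which every term other than $\tilde v$ carries a factor of $r$; setting $r = 0$ decouples the equations into $\tilde v'' = -\tilde v$ together with $\tilde\theta \equiv \Theta$, $\tilde F \equiv p_1$, $\tilde G \equiv p_2$, so $\tilde v(\tau) = \sin\tau$ with first positive zero $\tilde L = \pi$ and $\tilde v'(\pi) = -1 \ne 0$. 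Standard smooth dependence of ODE solutions on parameters, together with the implicit function theorem applied to $\tilde v(\tilde L; r, \Theta) = 0$, makes $\tilde L$ and the endpoint $(\tilde F(\tilde L), \tilde G(\tilde L))$ smooth functions of $(r, \Theta)$ in a neighborhood of $\{0\} \times \R$.

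Since $\tilde F' = r\cos\tilde\theta$ and $\tilde G' = r\sin\tilde\theta$, this yields the polar formula
\[
\Phi(r\cos\Theta, r\sin\Theta) = p + r\bigl(J_1(r,\Theta),\, J_2(r,\Theta)\bigr),
\]
with $J_i$ smooth, $J_1(0,\Theta) = \pi\cos\Theta$, and $J_2(0,\Theta) = \pi\sin\Theta$. Writing $J_i(r,\Theta) = J_i(0,\Theta) + rK_i(r,\Theta)$ with $K_i$ smooth and substituting $r\cos\Theta = x$, $r\sin\Theta = y$ gives
\[
\Phi(x,y) = p + \pi(x,y) + (x^2+y^2)\bigl(K_1(r,\Theta),\, K_2(r,\Theta)\bigr),
\]
so $\Phi$ is Fréchet differentiable at the origin with $d\Phi(0) = \pi I$. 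Away from the origin $\Phi$ is smooth, and converting polar partials to Cartesian ones via $\partial_x = \cos\Theta\,\partial_r - (\sin\Theta/r)\partial_\Theta$ and the symmetric formula for $\partial_y$ shows that the Cartesian partials of the $O(r^2)$ remainder extend continuously to zero at the origin, so $\Phi \in C^1(U)$.

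Finally, since $d\Phi$ is continuous on a fixed neighborhood of $0$ with $d\Phi(0) = \pi I$, taking $B$ large enough forces $\sup_U \|d\Phi - \pi I\| < \pi/2$. A standard mean value estimate then yields $|\Phi(z_1) - \Phi(z_2)| \ge (\pi/2)|z_1 - z_2|$ on $U$, so $\Phi$ is injective; combined with pointwise invertibility of $d\Phi$, this makes $\Phi$ a $C^1$ diffeomorphism of $U$ onto an open neighborhood of $p$. The main obstacle is the $C^1$ regularity at the origin: the parameters $(r,\Theta)$ are polar, which is singular at the origin, and only the fact that the leading term $r(\pi\cos\Theta,\pi\sin\Theta)$ is the polar representation of the Cartesian linear map $\pi \cdot \mathrm{id}$ prevents direction-dependent artifacts from appearing in the Cartesian partials at $0$.
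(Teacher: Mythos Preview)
Your proposal is correct and follows essentially the same route as the paper: both rescale by $\sigma=\lambda^{-1/2}$ to convert the $\lambda\to\infty$ limit into a regular ODE system at $\sigma=0$, read off the leading term $(\pi\cos\Theta,\pi\sin\Theta)$ via smooth dependence and the implicit function theorem, and conclude that $d\Phi(0)=\pi I$. Your treatment of $C^1$ regularity at the origin and the mean-value argument for global injectivity on $U$ are slightly more explicit than the paper's, which simply records uniform-in-$\Theta$ convergence of the difference quotient and then invokes the inverse function theorem; these are presentational rather than substantive differences.
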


\begin{proof}
Define
\[
	V = [0, B^{-1/2}) \times \R
\]
Define $\Psi:V \to \R^2$ by
\[
	\Psi(\sigma, \Theta)=
	\begin{cases}
		\frac{\Phi(\sigma \cos \Theta, \sigma \sin \Theta) - p}{\sigma} & \sigma \neq 0 \\
		(\pi \cos \Theta, \pi \sin \Theta) & \sigma = 0 \\
	\end{cases}
\]
We can realize $\Psi$ in an alternate way.
Let $(\sigma,\Theta)$ be in $V$.
Let $v_0$, $\theta_0$, $F_0$, and $G_0$ solve the initial value problem:
\[
\begin{cases}
	v_0'' = - \sigma \Big( \frac{ \cos \theta_0}{p_1 + \sigma F_0} \Big) v_0' - v_0 \\
	\theta_0' = \frac{\sigma \sin \theta_0 ( | v_0'| ^2 - v_0^2 )}{ (p_1 + \sigma F_0) ( | v_0' |^2 + v_0^2 ) } \\
	F_0' = \cos \theta_0 \\
	G_0' = \sin \theta_0 \\
	v_0(0) = 0, \quad v_0'(0) = 1 \\
	\theta_0(0) = \Theta \\
	F_0(0) = 0, \quad G_0(0) = 0 \\
\end{cases}
\]
We first claim that there is a positive $L_0$ in the maximal interval of existence such that $v_0$ is positive over $(0, L_0)$ and $v_0(L_0) =0$.
This is trivial if $\sigma = 0$.
If $\sigma > 0$, then this follows from Lemma \ref{critex} by setting $\lambda = 1/\sigma^2$ and defining functions $v$, $\theta$, $F$, and $G$ by
\[
\begin{split}
	&v(t) = \sigma v_0(t/\sigma) \\
	&\theta(t) = \theta_0(t/\sigma) \\
	&F(t) = p_1 + \sigma F_0(t/\sigma) \\
	&G(t) = p_2 + \sigma G_0(t/\sigma)
\end{split}
\]
We now observe that
\[
	\Psi(\sigma,\Theta) = \Big( F_0(L_0), G_0(L_0) \Big)
\]
It follows from the implicit function theorem that $\Psi$ is smooth.
Moreover, $\Phi$ is continuously differentiable and the differential of $\Phi$ at $(0,0)$ is $\pi$ times the identity map, because
\[
	\frac{| \Phi( \sigma \cos \Theta, \sigma \sin \Theta) - p - (\pi \sigma \cos \Theta, \pi \sigma \sin \Theta) |}{\sigma} = |  \Psi(\sigma, \Theta) - \Psi(0, \Theta) |
\]
This converges to zero as $\sigma$ tends to zero, uniformly in $\Theta$.
Now the inverse function theorem yields the lemma.
\end{proof}

\begin{proof}[Proof of Lemma 1.3]
Let $B$ be sufficiently large so that, by Lemma \ref{diffeo}, the function $\Phi$ is a $C^1$ diffeomorphism of $U$ onto a neighborhood of $p$ in $\R^2_+$.
Assume $q$ is sufficiently close to $p$ so that
\[
	\Lambda > B
\]
Suppose that $\alpha_1$ and $\alpha_2$ are curves in $\cC$ such that
\[
	\lambda_1(\alpha_1) = \lambda_1(\alpha_2) = \Lambda
\]
There are $\Theta_1$ and $\Theta_2$ in $[0,2\pi)$ such that
\[
	\frac{\alpha'_1(0)}{|\alpha'_1(0)|} = ( \cos \Theta_1, \sin \Theta_1)
\]
and
\[
	\frac{\alpha'_2(0)}{|\alpha'_2(0)|} = ( \cos \Theta_2, \sin \Theta_2)
\]
It follows from Lemma \ref{maxcrit} that
\[
	\Phi( \Lambda^{-1/2} \cos \Theta_1, \Lambda^{-1/2} \sin \Theta_1) = \Phi (\Lambda^{-1/2} \cos \Theta_2, \Lambda^{-1/2} \sin \Theta_2) = q
\]
Then $\Theta_1$ and $\Theta_2$ differ by an integer multiple of $2\pi$.
Now by Lemma \ref{maxcrit}, the curves $\alpha_1$ and $\alpha_2$ are reparametrizations of each other.
\end{proof}

\section{Critical Surfaces of Revolution}

In this section we prove Lemma \ref{critex}.
Let $\lambda$, $\Theta$, $p_1$, and $p_2$ be real numbers with $p_1>0$.
Let $v$, $\theta$, $F$, and $G$ solve the initial value problem
\[
\begin{cases}
	v'' = - ( \frac{ \cos \theta}{F} ) v' - \lambda v \\
	\theta' = \frac{\sin \theta ( | v'| ^2 - \lambda v^2 )}{ F ( | v' |^2 + \lambda v^2 ) } \\
	F' = \cos \theta \\
	G' = \sin \theta \\
	v(0) = 0, \quad v'(0) = 1 \\
	\theta(0) = \Theta \\
	F(0) = p_1, \quad G(0) = p_2 \\
\end{cases}
\]
Let $[0,\mu)$ be an interval over which the solutions exist.
For $h$ in $(0,\mu)$, define
\[
	\Sigma_h = \bigg\{ (F(t) \cos \omega, F(t) \sin \omega, G(t)) : t \in [0,h], \omega \in \R \bigg\}
\]
Note $F$ is positive over $(0,\mu)$, so $\Sigma_h$ is an immersed hypersurface in $\R^3$.
Let $\lambda_1(\Sigma_h)$ be the first Dirichlet eigenvalue of $\Sigma_h$.

\begin{Lemma}
\label{critjacobi}
Let $h$ be in $(0,\mu)$.
If $v$ is positive over $(0,h)$, then
\[
	\lambda \le \lambda_1(\Sigma_h)
\]
\end{Lemma}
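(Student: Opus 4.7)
The plan is to identify $v$ with a rotationally symmetric solution of the Laplace eigenvalue equation on $\Sigma_h$ and then apply a ground-state substitution in the Rayleigh quotient. Since $\beta = (F,G)$ is arclength parametrized and generates $\Sigma_h$, the induced pullback metric is the warped product $dt^2 + F(t)^2\,d\omega^2$. On this metric the Laplace--Beltrami operator applied to a rotationally symmetric function $u(t)$ is
\[
	\Delta u = \frac{1}{F}(Fu')'.
\]
Using $F' = \cos\theta$, the first equation in the initial value problem, $v'' = -(\cos\theta/F)v' - \lambda v$, rearranges to $(Fv')' + \lambda F v = 0$. Hence the rotationally symmetric extension of $v$ to $\Sigma_h$, call it $u$, satisfies $-\Delta u = \lambda u$ on the interior, vanishes on the boundary circle at $t=0$, and by the hypothesis is strictly positive on the interior and nonnegative on the boundary circle at $t=h$.

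Next, I would substitute this positive eigenfunction into the Rayleigh quotient. Fix any test function $w \in C_c^\infty(\mathrm{int}\,\Sigma_h)$. Since $u > 0$ on the interior, $\psi = w/u$ is smooth and compactly supported there. Writing $w = u\psi$, expanding $|\grad w|^2 = \psi^2|\grad u|^2 + u\grad u \cdot \grad(\psi^2) + u^2|\grad\psi|^2$, and integrating by parts (boundary terms vanish because $\psi$ is compactly supported) together with the identity $\grad \cdot (u\grad u) = |\grad u|^2 + u\Delta u = |\grad u|^2 - \lambda u^2$, I obtain
\[
	\int_{\Sigma_h} |\grad w|^2 = \lambda \int_{\Sigma_h} w^2 + \int_{\Sigma_h} u^2 |\grad \psi|^2 \ge \lambda \int_{\Sigma_h} w^2.
\]
By the density of $C_c^\infty(\mathrm{int}\,\Sigma_h)$ in $H_0^1(\Sigma_h)$, the variational characterization of $\lambda_1(\Sigma_h)$ yields $\lambda_1(\Sigma_h) \ge \lambda$.

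The main step to verify carefully is the identification of the one-dimensional ODE with the radial Laplace equation for the warped-product metric, which is a direct computation once $\cos\theta = F'$ is used. A minor subtlety is that $u$ vanishes on the inner boundary circle (and possibly the outer one), so $\psi = w/u$ is a priori only defined on the open interior; this is handled cleanly by restricting the substitution to test functions compactly supported in the interior and then invoking density in $H_0^1(\Sigma_h)$. Note that the sign of $v(h)$ plays no role: it suffices that $v > 0$ strictly on $(0,h)$ so that $u$ is strictly positive on the interior of $\Sigma_h$.
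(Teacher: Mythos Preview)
Your proof is correct but follows a different route from the paper. The paper's argument takes the positive first Dirichlet eigenfunction $\Phi$ of $\Sigma_h$ and applies Green's identity to the pair $(\phi,\Phi)$, where $\phi$ is the rotationally symmetric extension of $v$; since $\phi\ge 0$ on $\partial\Sigma_h$ and $\partial_\nu\Phi\le 0$ there, the boundary term is nonpositive, and since $\phi\Phi>0$ in the interior one reads off $\lambda\le\lambda_1(\Sigma_h)$. Your argument instead avoids invoking the eigenfunction $\Phi$ altogether and works directly with the Rayleigh quotient via the ground-state substitution $w=u\psi$, which is essentially Barta's inequality (or the Allegretto--Piepenbrink principle). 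The paper's proof is a couple of lines shorter but implicitly uses the existence and boundary regularity of $\Phi$; your version is slightly longer but entirely self-contained through the variational characterization. Both handle the boundary issue in the same spirit: the paper needs only $\phi\ge 0$ on $\partial\Sigma_h$, and you need only $u>0$ on the open interior so that compactly supported test functions can be divided by $u$.
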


\begin{proof}
Define $\phi: \Sigma_h \to \R$ by
\[
	\phi(F(t) \cos \omega, F(t) \sin \omega, G(t)) = v(t)
\]
Let $\Delta$ be the Laplace-Beltrami operator on $\Sigma_h$.
Then
\[
	-\Delta \phi = \lambda \phi
\]
Let $\Phi$ be the positive first Dirichlet eigenfunction of $\Sigma_h$.
Let $\nu$ be the outward-pointing unit normal vector on $\del \Sigma$.
Then
\[
	\int_{\Sigma_h} (\lambda-\lambda_1(\Sigma_h)) \phi \Phi \,dS = \int_\Sigma \phi \Delta \Phi - \Phi \Delta \phi \,dS = \int_{\del \Sigma} \phi \frac{\del \Phi}{\del \nu} \,ds \le 0
\]
Therefore, $\lambda \le \lambda_1(\Sigma_h)$.
\end{proof}

For $R > r > 0$, let $A_{r,R}$ be the concentric annulus in $\R^2$ with outer radius $R$ and inner radius $r$, centered at the origin.
Let $D$ be a disc in $\R^2$ of radius $p_1$.
Let $\lambda_1(A_{r,R})$ and $\lambda_1(D)$ be the first Dirichlet eigenvalues of $A_{r,R}$ and $D$, respectively.
Note that
\[
	\lambda_1(D) = \inf \bigg\{ \lambda_1 (A_{r,p_1}) : 0 < r < p_1 \bigg\}
\]
For a proof of this, we again refer to Rauch and Taylor \cite{RT}.
Define
\[
	b=\sup \bigg\{ F(t) : t \in [0,\mu) \bigg\}
\]
and
\[
	a=\inf \bigg\{ F(t) : t \in [0,\mu) \bigg\}
\]

\begin{Lemma}
\label{critfbound}
Assume $\lambda > \lambda_1(D)$ and $v$ is positive over $(0,\mu)$.
Then $a$ is positive, $b$ is finite, and
\[
	\lambda \le \lambda_1(A_{a,b})
\]
\end{Lemma}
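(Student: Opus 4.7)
The plan is to combine Lemma \ref{critjacobi} with Lemma \ref{Fbound} on truncations of the generating curve and then pass to the limit as $h \to \mu$.

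Fix $h$ in $(0,\mu)$. Since $v$ is positive on $(0,h)$, Lemma \ref{critjacobi} gives $\lambda \le \lambda_1(\Sigma_h)$. The surface $\Sigma_h$ is of revolution with generator $\beta|_{[0,h]} = (F,G)|_{[0,h]}$, so its first Dirichlet eigenvalue is precisely the Rayleigh quotient of this generator in the sense of the definition in the introduction. Setting $a_h = \min_{[0,h]} F$ and $b_h = \max_{[0,h]} F$, Lemma \ref{Fbound} yields $\lambda_1(\Sigma_h) \le \lambda_1(A_{a_h, b_h})$, and combining the two bounds I get
\[
    \lambda \le \lambda_1(A_{a_h, b_h}) \quad \text{for every } h \in (0, \mu).
\]

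Next I would rule out $b = \infty$ and $a = 0$ by contradiction. If $b = \infty$, choose $h_n$ with $F(h_n) \to \infty$. For large $n$ one has $a_{h_n} \le F(0) = p_1 \le b_{h_n}$, so domain monotonicity of the Dirichlet eigenvalue gives
\[
    \lambda_1(A_{a_{h_n}, b_{h_n}}) \le \lambda_1(A_{p_1, b_{h_n}}) \to 0,
\]
which is immediate from a radial sine test function and contradicts $\lambda > 0$. If $a = 0$, choose $h_n$ with $F(h_n) \to 0$; then $a_{h_n} \to 0$ and $b_{h_n} \ge p_1$, so
\[
    \lambda_1(A_{a_{h_n}, b_{h_n}}) \le \lambda_1(A_{a_{h_n}, p_1}) \to \lambda_1(D),
\]
where the limit is the Rauch--Taylor statement already cited in the excerpt. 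Since $\lambda > \lambda_1(D)$ by hypothesis, this again contradicts the uniform bound.

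Finally, with $0 < a$ and $b < \infty$ in hand, $a_h$ is non-increasing with limit $a$ and $b_h$ is non-decreasing with limit $b$, so the annuli $A_{a_h, b_h}$ exhaust $A_{a,b}$ from inside. Standard continuity of the first Dirichlet eigenvalue under monotone domain exhaustion gives $\lambda_1(A_{a_h, b_h}) \to \lambda_1(A_{a, b})$, and passing to the limit in the uniform inequality yields $\lambda \le \lambda_1(A_{a,b})$. There is no hard estimate here; the only thing to keep straight is the direction of domain monotonicity at each stage, which is what forces the two degenerate behaviors $F \to 0$ and $F \to \infty$ into contradiction via opposite annular inclusions.
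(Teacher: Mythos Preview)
Your argument is correct and follows essentially the same route as the paper: combine Lemma \ref{critjacobi} with Lemma \ref{Fbound} on $[0,h]$ to get $\lambda \le \lambda_1(A_{a_h,b_h})$, use this to rule out $a=0$ and $b=\infty$ via the Rauch--Taylor limit and domain monotonicity, and pass to the limit. The only cosmetic difference is that the paper first restricts to a subsurface $\Sigma_*$ over an interval $[t_1,t_2]$ where $F$ attains its extrema before invoking Lemma \ref{Fbound}, whereas you apply Lemma \ref{Fbound} directly to $\beta|_{[0,h]}$; since Lemma \ref{Fbound} already performs that restriction internally, your version is a slight streamlining of the same argument.
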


\begin{proof}
Let $h$ be in $(0,\mu)$.
Let
\[
	R=\max \bigg\{ F(t) : t \in [0,h] \bigg\}
\]
and
\[
	r=\min \bigg\{ F(t): t \in [0,h] \bigg\}
\]
Let $[t_1,t_2] \subset [0,h]$ be such that
\[
	\Big\{ F(t_1), F(t_2) \Big\} = \Big\{ r,R \Big\}
\]
Define
\[
	\Sigma_* = \bigg\{ (F(t) \cos \omega, F(t) \sin \omega, G(t)) : t \in [t_1,t_2], \omega \in \R \bigg\}
\]
Then $\lambda_1(\Sigma_h) \le \lambda_1(\Sigma_*)$.
Let $\gamma:[t_1,t_2] \to \R^2_+$ be the smooth regular curve defined by $\gamma=(F,G)$.
Then
\[
	\lambda_1(\Sigma_*) = \lambda_1(\gamma)
\]
By Lemmas \ref{Fbound} and \ref{critjacobi},
\[
	\lambda \le \lambda_1(A_{r,R})
\]
Since $\lambda > \lambda_1(D)$ this implies that $R$ is bounded above and $r$ is bounded below, independent of $h$.
Therefore $a$ is positive and $b$ is finite.
\end{proof}

\begin{Lemma}
\label{critLbound}
Assume that $v$ is positive over $(0,\mu)$.
Assume $a$ is positive and $b$ is finite.
Then $\mu$ is finite, and
\[
	\mu \le \pi \sqrt{\frac{b}{a \lambda}}
\]
\end{Lemma}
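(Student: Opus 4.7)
The plan is to recast the equation for $v$ as a Sturm--Liouville problem and argue by contradiction using a Rayleigh quotient bound together with a Sturm comparison. Using $F' = \cos \theta$, the equation for $v$ in the system rewrites as
\[
	(F v')' + \lambda F v = 0
\]
on $(0, \mu)$, with $v(0) = 0$ and $v > 0$ throughout $(0, \mu)$ by hypothesis. Let $T = \pi \sqrt{b / (a \lambda)}$, and suppose for contradiction that $\mu > T$, so in particular $v(T) > 0$.

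First I would bound the first Dirichlet eigenvalue
\[
	\Lambda_1(T) = \inf \bigg\{ \frac{\int_0^T (u')^2 F \, dt}{\int_0^T u^2 F \, dt} : u \in \Lip_0(0,T),\ u \not\equiv 0 \bigg\}
\]
from above. Using the test function $w(t) = \sin(\pi t / T)$ and the bounds $a \le F \le b$ on $[0,T]$, a direct computation (identical in spirit to the one in Lemma \ref{Lbound}) gives $\Lambda_1(T) \le \pi^2 b / (a T^2) = \lambda$.

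Next I would establish the reverse strict inequality by a Sturm comparison. Let $u_1$ be a positive first Dirichlet eigenfunction on $[0,T]$, so $u_1(0) = u_1(T) = 0$, $u_1 > 0$ on $(0,T)$, and $(F u_1')' + \Lambda_1(T) F u_1 = 0$. Multiplying the equation for $v$ by $u_1$, the equation for $u_1$ by $v$, subtracting, and integrating by parts over $[0,T]$ yields
\[
	(\lambda - \Lambda_1(T)) \int_0^T F u_1 v \, dt = v(T) F(T) u_1'(T).
\]
The integral on the left is strictly positive. The right-hand side is strictly negative, because $u_1'(T) < 0$ (since $u_1$ is positive on $(0,T)$ and vanishes at $T$, with $u_1'(T) \ne 0$ by uniqueness of ODE solutions) while $v(T), F(T) > 0$. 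Hence $\lambda < \Lambda_1(T)$, contradicting the bound from the previous step.

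The only step that requires genuine care is the Sturm comparison, but once the ODE is rewritten in divergence form $(F v')' + \lambda F v = 0$, the integration by parts is standard. The hypotheses $a > 0$ and $b < \infty$ enter through the Rayleigh quotient estimate, and the assumption that $v > 0$ on $(0,\mu)$ is precisely what makes $v(T) > 0$ and thus the boundary term in the comparison nontrivial.
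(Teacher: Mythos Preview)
Your proof is correct. The paper's proof is a two-line invocation of Lemma~\ref{Lbound} (the same sine test-function estimate you use) together with Lemma~\ref{critjacobi} (which shows $\lambda \le \lambda_1(\Sigma_h)$ via Green's identity on the surface $\Sigma_h$), giving $h \le \pi\sqrt{b/(a\lambda)}$ for every $h \in (0,\mu)$. Your argument is the one-dimensional unwinding of this: rather than pass through the surface $\Sigma_h$ and its first Dirichlet eigenvalue, you recast the $v$-equation as the Sturm--Liouville equation $(Fv')' + \lambda Fv = 0$ and run a direct Sturm comparison, which is precisely the 1D shadow of the Green's-identity step in Lemma~\ref{critjacobi}. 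The mathematical content is identical; your version is self-contained and avoids the surface-of-revolution language, while the paper's is shorter only because the two lemmas it cites are already in hand.
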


\begin{proof}
Let $h$ be a real number in $(0,\mu)$.
By Lemma \ref{Lbound} and Lemma \ref{critjacobi},
\[
	h \le \pi \sqrt{\frac{b}{a \lambda}}
\]
\end{proof}

\begin{Lemma}
\label{critvbound}
Assume that $a$ is positive and $b$ is finite.
Also assume that $v$ is positive over $(0,\mu)$.
Then $v$ and $v'$ are bounded over $[0,\mu)$.
\end{Lemma}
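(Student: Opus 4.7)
The plan is first to note that Lemma \ref{critLbound} already gives $\mu < \infty$ under the present hypotheses, and then to show that on a bounded interval the linear ODE for $v$ has bounded coefficients, so that a Gr\"onwall-type argument controls both $v$ and $v'$. The crucial algebraic point is that, because $F' = \cos\theta$, the equation $v'' = -(\cos\theta/F)v' - \lambda v$ can be rewritten in divergence form as $(Fv')' = -\lambda F v$. Either way, the first-order coefficient $\cos\theta/F$ is bounded in absolute value by $1/a$, and the zeroth-order coefficient $\lambda$ is constant.

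The natural energy to use is $E(t) = v'(t)^2 + \lambda v(t)^2$. Differentiating and substituting the ODE yields
\[
E'(t) = 2v'(t)\bigl(v''(t) + \lambda v(t)\bigr) = -\frac{2\cos\theta(t)}{F(t)}\, v'(t)^2,
\]
so that $|E'(t)| \le (2/a)\, v'(t)^2$. Since $\lambda > 0$ in the setting relevant to the sequel (Lemma \ref{critex} takes $\lambda > \lambda_1(D) > 0$), one has $v'(t)^2 \le E(t)$, and therefore $|E'(t)| \le (2/a) E(t)$. Gr\"onwall then gives $E(t) \le E(0)\, e^{2t/a} \le e^{2\mu/a}$, and since $\mu$ is finite this bounds both $v$ and $v'$ on $[0,\mu)$.

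The only real subtlety is a bookkeeping one: if one wanted to allow $\lambda \le 0$, $E$ would no longer dominate $v'^2$. In that case I would work instead with the positive-definite quantity $\tilde E = v^2 + v'^2$, treating the equation as a first-order linear system $(v,v')' = A(t)(v,v')$ whose matrix norm is uniformly bounded on $[0,\mu)$ by $\max(1,1/a) + |\lambda|$; Gr\"onwall applied to $\tilde E$ then gives the bound directly on the finite interval, with no sign condition on $\lambda$. Either route is short, and the heart of the argument is really the combination ``finite $\mu$ + bounded coefficients'' made available by Lemma \ref{critLbound} and the hypothesis $a > 0$.
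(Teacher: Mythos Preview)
Your argument is correct. Once Lemma~\ref{critLbound} supplies $\mu<\infty$, the energy $E=v'^2+\lambda v^2$ satisfies $E'=-\tfrac{2\cos\theta}{F}\,v'^2$, hence $|E'|\le\tfrac{2}{a}E$ when $\lambda>0$, and Gr\"onwall finishes the job on the bounded interval. The alternative with $\tilde E=v^2+v'^2$ for general $\lambda$ is also fine, though in this section only $\lambda>\lambda_1(D)>0$ is ever needed (indeed Lemma~\ref{critLbound} itself already requires $\lambda>0$ for its conclusion to be meaningful).

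This is a genuinely different route from the paper's. The paper argues geometrically: writing $\phi$ for the radial extension of $v$ to the surface $\Sigma_h$, Green's identity gives
\[
\lambda\int_{\Sigma_h}\phi\,dS \;=\; -\int_{\partial\Sigma_h}\frac{\partial\phi}{\partial\nu}\,ds \;=\; 2\pi p_1 - 2\pi F(h)v'(h),
\]
so $v'(h)F(h)\le p_1$ directly, without any exponential factor. Boundedness of $v$ then follows from $\mu<\infty$, and a second look at the identity bounds $v'$ below. Your ODE-energy approach is more elementary and entirely self-contained; the paper's approach yields the sharper explicit inequality $Fv'\le p_1$ (which is the integrated form of the divergence identity $(Fv')'=-\lambda Fv$ you noted) and keeps the argument in the same geometric idiom as the surrounding lemmas. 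Either is adequate for the lemma as stated.
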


\begin{proof}
By Lemma \ref{critLbound}, $\mu$ is finite.
Let $h$ be in $(0,\mu)$.
Define $\phi: \Sigma_h \to \R$ by
\[
	\phi(F(t) \cos \omega, F(t) \sin \omega, G(t)) = v(t)
\]
Let $\Delta$ be the Laplace-Beltrami operator on $\Sigma_h$.
Then
\[
	-\Delta \phi = \lambda \phi
\]
If $\nu$ is the outward-pointing unit normal vector on $\del \Sigma$, then, by Green's identity,
\[
	\lambda \int_{\Sigma_h} \phi \,dS = \int_{\Sigma_h} -\Delta \phi \,dS = \int_{\del \Sigma_h} - \frac{\del \phi}{\del \nu} \,ds = 2\pi p_1- 2\pi F(h)v'(h)
\]
In particular,
\[
	v'(h) F(h) \le p_1
\]
This proves that $v'$ is bounded above over $[0,\mu)$.
Now $v$ is bounded above over $(0,\mu)$, because $\mu$ is finite by Lemma \ref{critLbound}.
It follows that there is a $C>0$, independent of $h$ in $(0,\mu)$, such that
\[
	\lambda \int_{\Sigma_h} \phi < C
\]
This implies that $v'$ is bounded below.
\end{proof}

\begin{Lemma}
\label{critvsumbound}
Assume $a$ is positive, $b$ is finite, and $\mu$ is finite.
Assume $v$ is positive over $(0,\mu)$.
Then there is a positive constant $c$ such that
\[
	\lambda |v(t)|^2 + | v'(t)|^2 \ge c
\]
for all $t$ in $(0,\mu)$.
\end{Lemma}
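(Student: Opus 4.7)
The plan is to introduce the energy-type quantity $E(t) = \lambda v(t)^2 + v'(t)^2$, show it decays at most exponentially using the ODE for $v$, and then observe that $E(0) = 1$ and the finiteness of $\mu$ give a uniform positive lower bound.

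The first step is to compute $E'$. Differentiating and substituting the first equation $v'' = -(\cos \theta / F) v' - \lambda v$, the two $\pm 2\lambda v v'$ cross terms cancel, leaving the clean identity
\[
	E'(t) = - \frac{2\cos\theta(t)}{F(t)} \, v'(t)^2.
\]
This is the main observation; the other three equations of the system play no role here.

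The second step is purely analytic. Using the hypothesis $F \ge a > 0$ together with $|\cos \theta| \le 1$, one gets $|E'(t)| \le (2/a) v'(t)^2 \le (2/a) E(t)$, so in particular $E' \ge -(2/a) E$. Grönwall's inequality (equivalently, integrating $(\log E)' \ge -2/a$) then gives
\[
	E(t) \ge E(0) \, e^{-2t/a} = e^{-2t/a} \ge e^{-2\mu/a}
\]
for every $t \in [0,\mu)$, since $E(0) = v'(0)^2 = 1$ and $\mu$ is finite by hypothesis. Hence one may take $c = e^{-2\mu/a}$.

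I do not anticipate a real obstacle. The only minor subtlety is that integrating the differential inequality requires $E$ to stay positive; but $E(0) = 1$ and continuity, together with the inequality itself, preclude $E$ from vanishing in finite time. Note that neither the positivity of $v$ on $(0,\mu)$ nor the upper bound $b$ on $F$ is actually needed for the argument, although both are among the hypotheses of the statement.
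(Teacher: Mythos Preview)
Your proof is correct, and it takes a genuinely different route from the paper's argument. The paper proceeds geometrically: it builds a positive harmonic function $\zeta$ on the surface $\Sigma$ (via the auxiliary ODE $\rho'' = -(\cos\theta/F)\rho'$), applies Green's identity on $\Sigma_h$ to relate $\int_{\Sigma_h} \lambda \phi \zeta$ to boundary terms involving $v(h)$, $v'(h)$, $\rho(h)$, $\rho'(h)$, and then argues that the boundary contribution has positive $\liminf$ as $h \to \mu$, forcing $|v|^2 + |v'|^2$ not to tend to zero. Your approach, by contrast, is a purely one-dimensional energy estimate: the identity $E' = -\tfrac{2\cos\theta}{F}(v')^2$ and the bound $F \ge a$ give $E' \ge -\tfrac{2}{a}E$, and Gr\"onwall with $E(0)=1$ finishes it. Your argument is shorter and more elementary, gives an explicit constant $c = e^{-2\mu/a}$, and, as you note, actually uses neither the hypothesis $v>0$ nor the finiteness of $b$. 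The paper's argument, while less direct here, fits the prevailing geometric/variational style of the section and reuses the Green's-identity machinery from the preceding lemma.
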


\begin{proof}
Define
\[
	\Sigma = \bigg\{ \Big( (F(t) \cos \omega, F(t) \sin \omega, G(t) \Big) : t \in [0,\mu), \omega \in \R \bigg\}
\]
Note that there is a function $\rho:[0,\mu) \to \R$ which satisfies
\[
	\begin{cases}
		\rho'' = -\frac{\cos \theta}{f} \rho' \\
		\rho(0) = 0 \\
		\rho'(0) = 1
	\end{cases}
\]
Define $\zeta: \Sigma \to \R$ by
\[
	\zeta \Big( F(t) \cos \omega, F(t) \sin \omega, G(t) \Big) = \rho(t)
\]
Then $\zeta$ is positive, harmonic, bounded, and has bounded gradient.
Let $h$ be in $(0,\mu)$.
If $\nu$ is the outward-pointing unit normal vector on $\del \Sigma_h$, then by Green's identity,
\[
	\int_{\Sigma_h} \lambda \phi \zeta \,dS = \int_{\Sigma_h} \phi \Delta \zeta - \zeta \Delta \phi \,dS = \int_{\del \Sigma_h} \phi \frac{\del \zeta}{\del \nu} - \zeta \frac{\del \phi}{\del \nu} \,ds
\]
Therefore,
\[
	\liminf_{h \to \mu} \int_{\del \Sigma_h} \phi \frac{\del \zeta}{\del \nu} - \zeta \frac{\del \phi}{\del \nu} \,ds > 0
\]
It follows that
\[
	\liminf_{h \to \mu} |v(h)|^2 + |v'(h)|^2 > 0
\]
Since $v$ and $v'$ cannot vanish simultaneously in $[0,\mu)$, this implies the result.
\end{proof}

\begin{proof}[Proof of Lemma 4.2]
Suppose not.
Let $[0, \mu)$ be the maximal positive interval of existence.
Then $v$ is positive over $(0,\mu)$.
Then by Lemma \ref{critfbound}, $a$ is positive and $b$ is finite.
By Lemma \ref{critLbound}, $\mu$ is finite.
By Lemma \ref{critvbound}, the functions $v$ and $v'$ are bounded over $[0,\mu)$.
By Lemma \ref{critvsumbound}, the function
\[
	\frac{1}{|v'|^2+\lambda v^2}
\]
is bounded over $[0,\mu)$.
Therefore, the solutions $v, \theta, F, G$ can be continued beyond $\mu$, with $f$ and $|v'|^2 + \lambda v^2$ positive.
This contradicts the assumption that $[0,\mu)$ is the maximal positive interval of existence.
\end{proof}

\end{document}